\newcommand{\R}{\ensuremath{\mathbb{R}}}
\newcommand{\N}{\ensuremath{\mathbb{N}}}
\renewcommand{\div}{\textrm{div }}
\newcommand{\curl}{\textrm{curl }}
\theoremstyle{definition} 
\newtheorem{theorem}{Theorem}[section]
\newtheorem{proposition}[theorem]{Proposition}
\newtheorem{definition}[theorem]{Definition}
\newtheorem{lemma}[theorem]{Lemma}
\newtheorem{remark}[theorem]{Remark}
\begin{document}

\title[Variable mixing speed for Muskat]{Mixing solutions for the Muskat problem with variable speed}

\author{Florent Noisette}
\address{{\'E}cole Normale Sup{\'e}rieure, 45 rue d'Ulm, 75230 Paris, France}
\email{fnoisette@clipper.ens.fr}

\author{L\'aszl\'o Sz\'ekelyhidi Jr.}
\address{Institut f\"ur Mathematik, Universit\"at Leipzig, D-04103 Leipzig, Germany}
\email{laszlo.szekelyhidi@math.uni-leipzig.de}

\date{\today}

\begin{abstract}
We provide a quick proof of the existence of mixing weak solutions for the Muskat problem with variable mixing speed. Our proof is considerably shorter and extends previous results in \cite{ccf:ipm} and \cite{fsz:ipm}.
\end{abstract}

\maketitle

\section{Introduction}

The mathematical model for the evolution of two incompressible fluids moving in a porous medium, such as oil and water in sand, was introduced by Morris Muskat in his treatise \cite{mus:1937}, and is based on Darcy's law (see also \cite{Saffman:1958cu,wooding}). In this paper we focus on the case of constant permeability under the action of gravity so that, after non-dimensionalizing, the equations describing the evolution of density $\rho$ and velocity $u$ are given by (see \cite{otto:ipm,cg:condynipm} and references therein)
\begin{align}
\partial_t \rho + \div(\rho u) &= 0\,,\label{IPM1} \\
\div u &= 0\,,\label{IPM2}\\
u + \nabla p &= -(0,\rho)\,,\label{IPM3} \\
\rho(x,0) &= \rho_0(x)\,.\label{IPM4}
\end{align}
We assume that at the initial time the two fluids, with densities $\rho^+$ and $\rho^-$, are separated by an interface which can be written as the graph of a function over the horizontal axis. That is, 
\begin{align}\label{init}
\rho_0(x) = \begin{cases} \rho^+ & x_2 > z_0(x_1),\\ \rho^- & x_2 < z_0(x_1). \end{cases}
\end{align}
Thus, the interface separating the two fluids at the initial time is given by $\Gamma_0 := \{(s,z_0(s))| s \in \R\}$. Assuming that $\rho(x,t)$ remains in the form \eqref{init} for positive times, the system reduces to a non-local evolution problem for the interface $\Gamma$. If the sheet can be presented as a graph as above, one can show (see for example \cite{cg:condynipm}) that the equation for $z(s,t)$ is given by
\begin{align}\label{evolz}
\partial_t z(s,t) = \frac{\rho^- - \rho^+}{2\pi} \int_{- \infty}^{\infty}  \frac{(\partial_{s} z(s,t) - \partial_{s} z(\xi,t))(s-\xi)}{(s-\xi)^2 + (z(s,t) - z(\xi,t))^2} d\xi.
\end{align}
Linearising \eqref{evolz} around the flat interface $z=0$ reduces to $\partial_tf=\frac{\rho^+-\rho^-}{2}\mathcal{H}(\partial_sf)$, where $\mathcal{H}$ denotes the classical Hilbert transform. Thus one distinguishes the following cases: The case $\rho^+ > \rho^-$ is called the unstable regime and amounts to the situation where the heavier fluid is on top. The case $\rho^+< \rho^-$ is called the stable regime. In the stable case, this equation is locally well-posed in $H^3(\R)$, see \cite{cg:condynipm,cgvs:gr2dm}, whereas in the unstable case, we have an ill-posed problem, see \cite{Saffman:1958cu,cg:condynipm}, and there are no general existence results for \eqref{evolz} known. Thus, the description of \eqref{IPM1}-\eqref{IPM4} as a free boundary problem seems not suitable for the unstable regime. Indeed, as shown in experiments \cite{wooding}, in this regime the sharp interface seems to break down and the two fluids start to mix on a mesoscopic scale. In a number of applications \cite{mus:1937, wooding}, however, it is precisely this mixing process in the unstable regime which turns out to be highly relevant, calling for an amenable mathematical framework.   

\subsection*{Mixing solutions and admissible subsolutions}

A notion of solution, which allows for a meaningful existence theory and at the same time able to represent the physical features of the problem such as mixing, was introduced in \cite{sze:ipm}; it is based on the concept of subsolution, which appears naturally when considering stability of the nonlinear system \eqref{IPM1}-\eqref{IPM4} under weak convergence \cite{dls:hprinc}. This point of view was pioneered by L.~Tartar in the 1970s-80s in his study of compensated compactness \cite{Tartar:1977}, and experienced renewed interest in the past 10 years in connection with the theory of convex integration, applied to weak solutions in fluid mechanics \cite{dlseuler,dls:admcrit,cfg:luipm,shvydkoy:2011}. In order to state the definition, we recall that after applying a simple affine change of variables we may assume $|\rho^\pm|=1$. In particular, for the rest of the paper we will be concerned with the unstable case, so that 
$$
\rho^+=+1,\quad \rho^-=-1.
$$
With this normalization subsolutions are defined as follows (c.f.~\cite[Definition 4.1]{sze:ipm}).

\begin{definition}\label{d:subsol}
Let $T > 0$. We call a triple $(\rho,u,m) \in L^{\infty}(\R^2 \times [0,T))$ an admissible subsolution of \eqref{IPM1}-\eqref{IPM4} if there exist open domains $\Omega^{\pm}, \Omega_{mix}$ with $\overline{\Omega^+} \cup \overline{\Omega^-} \cup \Omega_{mix} = \R^2\times [0,T)$ such that
\begin{enumerate}
\item[(i)] The system
\begin{equation}\label{IPMsub}
\begin{split}
\partial_t \rho + \div m &= 0\\
\div  u &= 0\\
\curl  u &= -\partial_{x_1} \rho\\
\rho\vert_{t=0} &= \rho_0
\end{split}
\end{equation}
holds in the sense of distributions in $\R^2 \times [0,T)$;
\item[(ii)] The pointwise inequality
\begin{align}\label{str}
\left|m-\rho u + \frac{1}{2} (0,1-\rho^2) \right| \leq  \frac{1}{2} \left(1-\rho^2 \right),
\end{align}
holds almost everywhere;
\item[(iii)] $|\rho(x,t)| = 1$ in $\Omega^{+}\cup\Omega^-$;
\item[(iv)] In $\Omega_{mix}$ the triple $(\rho,u,m)$ is continuous and \eqref{str} holds with a strict inequality.
\end{enumerate}
\end{definition}

Observe that whenever $\rho=\pm 1$ for an admissible subsolution, then by \eqref{str} the system \eqref{IPMsub} reduces to \eqref{IPM1}-\eqref{IPM3}. Conversely, in the set $\Omega_{mix}$ the subsolution $\rho$ represents the coarse-grained density of a microscopically mixed state. More precisely, we have the following theorem from \cite{sze:ipm,ccf:ipm}:
\begin{theorem}\label{t:mixing}
Suppose there exists an admissible subsolution $(\bar{\rho},\bar{u},\bar{m})$ to \eqref{IPM1}-\eqref{IPM4} . Then there exist infinitely many 
admissible weak solutions $(\rho,u)$ with the following additional mixing property: For any $r>0$, $0<t_0<T$ and $x_0\in\R^2$ such that $B:=B_r(x_0,t_0)\subset \Omega_{mix}$, both sets $\{(x,t)\in B:\,\rho(x,t)=\pm 1\}$ have strictly positive Lebesgue measure. 

Furthermore, there exists a sequence of such admissible weak solutions $(\rho_k,u_k)$ such that $\rho_k\overset{*}\rightharpoonup \bar{\rho}$ as $k\to\infty$. 
\end{theorem}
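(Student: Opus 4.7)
The plan is to apply the convex integration / Baire category scheme of De~Lellis--Sz\'ekelyhidi, specialised to the IPM constraint geometry. The linear system \eqref{IPMsub} is constant-coefficient, so one can identify a wave cone $\Lambda$ of directions $(\tilde\rho,\tilde u,\tilde m)$ along which plane-wave solutions of \eqref{IPMsub} exist, while the pointwise constraint \eqref{str} together with $\rho=\pm 1$ defines a compact constraint set $K$. The content of the IPM convex-integration analysis is that the $\Lambda$-convex hull of $K$ coincides with the closed elliptic region cut out by \eqref{str}, so that strict subsolutions lie precisely in the relative interior of $K^{co}$.

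Concretely, I would define $X_0$ to be the set of continuous triples on $\Omega_{mix}$ that coincide with $(\bar\rho,\bar u,\bar m)$ on $\overline{\Omega^+}\cup\overline{\Omega^-}$, solve \eqref{IPMsub} distributionally on $\R^2\times[0,T)$, and satisfy \eqref{str} with strict inequality inside $\Omega_{mix}$. Let $X$ be the closure of $X_0$ in the weak-$*$ $L^\infty$ topology on a large bounded ball; on such a ball the weak-$*$ topology is metrisable, so $X$ becomes a complete metric space. On $X$ consider the functional $I(\rho,u,m):=\int_{\Omega_{mix}}(1-\rho^2)$. It is upper-semicontinuous under weak-$*$ convergence, hence Baire-1 as a pointwise limit of continuous functionals, and its continuity points are exactly those elements of $X$ on which $I$ vanishes. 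Any such element has $|\rho|=1$ a.e.\ in $\Omega_{mix}$, and hence, together with the boundary matching and \eqref{str}, furnishes an admissible weak solution of \eqref{IPM1}--\eqref{IPM4}.

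The technical heart is the \emph{perturbation lemma}: given $(\rho,u,m)\in X_0$ and a ball $B\subset\Omega_{mix}$ in which $1-\rho^2\ge\delta>0$, there exists an admissible perturbation compactly supported in $B$, still in $X_0$, which increases $I$ by at least $c(\delta)|B|$. This perturbation is built by localising a plane wave along a suitable direction in $\Lambda$, with amplitude governed by the distance of $(\rho,u,m)$ to $\partial K^{co}$, using a potential formulation of \eqref{IPMsub} to preserve the differential constraints. Once this lemma is available, standard density and Baire arguments yield a dense $G_\delta$ of exact solutions in $X$; localising the perturbation to arbitrarily small balls inside $\Omega_{mix}$ produces the mixing property, since both $\{\rho=+1\}$ and $\{\rho=-1\}$ must have positive measure in every such ball (otherwise a further $\Lambda$-perturbation would strictly increase $I$, contradicting continuity). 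For the last statement, one applies the same argument on $X_k:=\{(\rho,u,m)\in X:d_{w^*}((\rho,u,m),(\bar\rho,\bar u,\bar m))<1/k\}$, which is a non-empty complete metric subspace containing $(\bar\rho,\bar u,\bar m)$ itself, and extracts an exact solution from each level.

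The main obstacle is the perturbation lemma, and in particular making quantitative the $\Lambda$-convex hull computation: one must exhibit, at every admissible strict subsolution state, explicit $\Lambda$-directions along which one can move by an amount commensurate with $1-\rho^2$ without leaving the constraint set \eqref{str}. The novelty claimed by the paper presumably lies in giving a direct, geometric realisation of this step that bypasses the iterated $T_N$-configuration or piecewise constant subsolution constructions used in \cite{ccf:ipm,fsz:ipm}.
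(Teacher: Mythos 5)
The paper does not prove Theorem \ref{t:mixing}; it is imported verbatim from \cite{sze:ipm,ccf:ipm}, and the content of the present paper is the construction of a subsolution satisfying its hypotheses, namely Theorem \ref{Mth}. That said, your outline correctly reproduces the skeleton of the Baire-category scheme used in those references: wave cone, $\Lambda$-convex hull of the constraint set cut out by \eqref{str}, weak-$*$ closure $X$ of the strict subsolutions $X_0$, the Baire-$1$ functional $I=\int(1-\rho^2)$ whose continuity points are exact solutions, and a localized plane-wave perturbation lemma quantified by the distance to $\partial K^{co}$; at that level the proposal is on the right track.

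However, the argument you give for the mixing property does not close. If a weak solution obtained this way had $\rho\equiv +1$ a.e.\ on a ball $B\subset\Omega_{mix}$, then $I_B(\rho)=\int_B(1-\rho^2)$ already vanishes, and there is no ``further $\Lambda$-perturbation strictly increasing $I$'': the perturbation lemma applies to continuous strict subsolutions in $X_0$, and its allowed amplitude is controlled by the gauge $1-\rho^2$, which tends to zero in $B$ along any approximating sequence. Vanishing of $I$ on every ball gives only $|\rho|=1$ a.e., which is consistent with $\rho\equiv+1$ on $B$. A second, weakly-$*$ \emph{continuous} ingredient is needed: for instance, for a countable dense family of balls $B_n\subset\Omega_{mix}$, the sets $Y_n:=\bigl\{\bigl|\int_{B_n}\rho\bigr|<|B_n|\bigr\}$ are open in $X$ and contain $X_0$ (a subsolution is continuous with $|\rho|<1$ on $\overline{B_n}$), hence dense; intersecting the residual continuity set of $I$ with $\bigcap_n Y_n$ gives solutions with $|\rho|=1$ a.e.\ and $\bigl|\int_B\rho\bigr|<|B|$ for every ball, which is the mixing property. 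A similar localization to small weak-$*$ neighbourhoods of $(\bar\rho,\bar u,\bar m)$ yields the sequence $\rho_k\overset{*}{\rightharpoonup}\bar\rho$. Lastly, your closing guess about the paper's novelty is misplaced: the paper does not revisit the hull computation or the perturbation step, and it retains the piecewise-constant-density ansatz of \cite{fsz:ipm} (Section \ref{s:sub}); its new content is the handling of variable $c(s)$ through the time-expansion of the normal velocities in Proposition \ref{p:expansion}.
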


In other words $\bar{\rho}$ represents a sort of ``coarse-grained, average'' density.
Recently this coarse-graining property of Theorem \ref{t:mixing} was sharpened in \cite{cfm:degraded} along the lines of \cite{dls:admcrit} to the statement that, essentially, $\bar\rho$ denotes the average density not just on space-time balls $B$ but also space-balls for {\em every} time $t>0$.
 
Theorem \ref{t:mixing} is based on a general and very robust Baire-category type argument (c.f.~\cite{dls:hprinc} as well as \cite[Appendix]{sze:ipm}), and basically highlights the key observation that {\em a central object in the study of unstable hydrodynamic interfaces is a suitably defined subsolution}. In recent years this approach has been successfully applied in various contexts: for the incompressible Euler system in the presence of a Kelvin-Helmholtz instability \cite{sze:vortex,bszw:nonuniq,mengualsz}, the density-driven Rayleigh-Taylor instability \cite{gksz:2020}, the Muskat problem with fluids of different mobilities \cite{mengual:2020}, as well as in the context of the compressible Euler system \cite{cdk:gip}.

\subsection*{Evolution of the mixing region - the pseudointerface}

An interesting phenomenon concerning the evolution of the coarse-grained interface was discovered by A.~Castro, D.~Cordoba and D.~Faraco in \cite{ccf:ipm}: for general (sufficiently smooth) initial curves $\Gamma_0$ the mixing (sub)solutions exhibit a two-scale dynamics. On a fast scale the sharp interface diffuses to a mixing zone $\Omega_{mix}$ at some speed $c>0$, which has a stabilizing effect on the overall dynamics. On a slower scale the mixing zone itself begins to twist and evolve according to the now regularized evolution of the mid-section of $\Omega_{mix}$, called a pseudo-interface. 

The authors in \cite{ccf:ipm} showed that appearance of a mixing zone with speed $c$ is compatible with the requirements of Definition \ref{d:subsol} provided $c<2$ (for the flat initial condition $c<2$ was also the upper bound reached in \cite{sze:ipm}, in agreement with the relaxation approach in \cite{otto:ipm}), and by a suitable {\em ansatz} exhibited the regularized evolution of the pseudo-interface as a nonlinear and nonlocal evolution equation of the form (see (1.11)-(1.12) in \cite{ccf:ipm})
\begin{equation}\label{e:ccf}
\partial_tz=\mathcal{F}(z).
\end{equation}
In a technical tour de force they were able to show well-posedness of \eqref{e:ccf} for initial data $z_0\in H^5(\R)$. Roughly speaking, the key point is that the linearization of \eqref{evolz}, in Fourier space written as $\partial_t\hat f=|\xi|\hat{f}$, is modified by the appearance of the mixing zone to
\begin{equation}\label{e:evolf}
\partial_t\hat{f}=\frac{|\xi|}{1+ct|\xi|}\hat f,
\end{equation}
which leads to $\hat{f}=(1+ct|\xi|)^{1/c}\hat f_0$. The analysis of this equation was performed for constant $c=1$ in \cite{ccf:ipm}, and recently extended to  variable $c=c(s)$ in \cite{acf:2020} (in which case \eqref{e:evolf} has to be interpreted as a pseudodifferential equation) under certain restrictive conditions. In particular, the analysis in \cite{acf:2020} applies under the condition that the range of mixing speed   $c_1\leq c(s)\leq c_2<2$ is small: $0<c_1\leq c_2\leq \frac{c_1}{1-c_1}$; furthermore, high regularity is required: $c\in W^{k,\infty}(\R)$ with $k>c^{-1}$  (c.f.~\cite[Definition 4]{acf:2020}).

It was observed in \cite{fsz:ipm} that the ansatz of \cite{ccf:ipm} is too restrictive since after a short initial time the macroscopic evolution of the pseudo-interface quite likely becomes non-universal. Thus, the authors in \cite{fsz:ipm} replaced the equation \eqref{e:ccf} by a simple expansion in time upto second order,
\begin{equation}\label{e:z0z1z2}
z(s,t)=z_0(s)+tz_1(s)+\tfrac12t^2z_2(s),
\end{equation}
and showed that a suitable choice of $z_1$ and $z_2$ leads to an evolution which is compatible with Definition \ref{d:subsol} for any constant speed $c\in (0,2)$. More precisely, in the expansion \eqref{e:z0z1z2} $z_1=\partial_tz|_{t=0}:=u_{\nu}|_{t=0}$ is chosen as the normal velocity induced on the interface by \eqref{evolz} at time $t=0$, whereas $z_2$ involves a non-local operator of the same type applied to $z_0$ plus a local curvature term: 
$$
z_2:=T[z_0]+c\frac{1-(\partial_sz_0)^2}{(1+(\partial_sz_0)^2)^{1/2}}\kappa_0,
$$  
where $\kappa_0=\kappa_0(s)$ is the curvature of the initial interface $\Gamma_0$. This expansion reveals an important difference to the approach in \cite{ccf:ipm}: the regularity of the pseudointerface does not deteriorate with small $c\to 0$, in sharp contrast to the evolution in \eqref{e:evolf}. From a physical point of view this is natural to expect, if one takes into account the scale separation in the two dynamics: once a mixing zone appears, the pseudointerface is a matter of arbitrary choice, the only relevant object for the system \eqref{IPMsub}-\eqref{str} being the set $\Omega_{mix}$. Thus, a coupling between the two dynamics should appear at most in terms of higher order fluctuations; indeed, a closer look \cite{ccf:ipm,acf:2020} reveals that the deterioration of regularity observed in \eqref{e:evolf} in fact applies to $f=\partial_s^4z$. 

Motivated by this heuristic, in this short note we extend and simplify the analysis of \cite{sze:ipm} by 
\begin{enumerate}
\item allowing for variable mixing speed $c=c(s)$ within the whole range $0<\inf_{\R}c\leq \sup_{\R}c<2$, with no degeneration of regularity;
\item allowing for asymptotically vanishing mixing speed $c(s)\to 0$ as $|s|\to\infty$ in case the initial interface $z_0$ is asymptotically horizontal. 
\end{enumerate}
Moreover, our analysis shows that the expansion \eqref{e:z0z1z2} above, obtained in \cite{sze:ipm}, remains valid upto second order even in this generality, thus giving further evidence towards {\em universality} of the macroscopic evolution.

\subsection*{The main result}

In this section we state the precise form of our main result. 

Our assumption on the initial datum is that the initial interface is asymptotically flat with some given slope $\beta\in\R$, i.e. $\rho_0$ is given by \eqref{init} with
\begin{equation}\label{e:z0}
	z_0(s) = \beta s + \tilde{z}_0(s)
\end{equation}
for some $\tilde{z}_0$ with sufficiently fast decay at infinity, using -- as in  \cite{sze:ipm} -- the following weighted H\"older norms: for any $0<\alpha<1$ set
$$
\|f\|_0^*:=\sup_{s\in\R}(1+|s|^{1+\alpha})|f(s)|.
$$
Furthermore, we define the associated H\"older (semi-)norms as follows. We set
$$
[f]^*_{\alpha}:=\sup_{|\xi|\leq 1,s\in\R}(1+|s|^{1+\alpha})\frac{|f(s-\xi)-f(s)|}{|\xi|^{\alpha}},
$$
and for any $k\in\N$ 
$$
\|f\|_{k,\alpha}^*:=\sup_{s\in\R,j\leq k}(1+|s|^{1+\alpha})|\partial_s^jf(s)|+[\partial_s^kf]_{\alpha}^*.
$$
We denote by $C_*^{k,\alpha}(\R):=\{f\in C^{k,\alpha}(\R):\,\|f\|_{k,\alpha}^*<\infty\}$.

Next, we describe the geometry of the coarse-grained evolution. 
Given a pseudointerface $z:\R \times [0,T] \rightarrow \R$ and mixing speed $c:\R\to(0,\infty)$ define $\Omega^{\pm}(t)$ and $\Omega_{mix}(t)$ as 
\begin{equation}\label{omega}
\begin{split}
\Omega^+(t) &= \{x \in \R^2 | x_2 > z(x_1,t) + c(x_1)t\},\\
\Omega_{mix}(t) &= \{x \in \R^2 |z(x_1,t) - c(x_1)t < x_2 < z(x_1,t) + c(x_1)t\},\\
\Omega^-(t) &= \{x \in \R^2 | x_2 < z(x_1,t) - c(x_1)t\},\\
\end{split}
\end{equation}
and set
$$
\Omega^{\pm}=\bigcup_{t>0}\Omega^{\pm}(t),\quad \Omega_{mix}=\bigcup_{t>0}\Omega_{mix}(t).
$$

\begin{theorem}\label{Mth}
Let $z_0(s) = \beta s + \overline{z}_0(s)$ with $\overline{z}_0 \in C^{3,\alpha}_*(\R)$ for some $0<\alpha<1$ and $\beta \in \R$. Let $c=c(s)>0$ with $\sup_sc(s)<2$ and $\partial_sc\in C^{\alpha}_*(\R)$. If $\inf_sc(s)=0$, assume in addition that $\beta=0$ and there exists $c_{min}>0$ such that
$$
c(s)\geq c_{min}(1+|s|^{2\alpha/3})^{-1}.
$$
Then there exists $T>0$ such that there exists a pseudo-interface $z\in C^2([0,T];C^{1,\alpha}(\R))$ with $z|_{t=0}=z_0$ for which the mixing zone defined in \eqref{omega} admits admissible subsolutions on $[0,T]$. In particular there exist infinitely many admissible weak solutions to \eqref{IPM1}-\eqref{IPM4} on $[0,T]$ with mixing zone given by \eqref{omega}.
\end{theorem}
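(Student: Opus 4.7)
The plan is to adopt the second-order ansatz of \cite{fsz:ipm} but now with the variable speed $c=c(s)$. Concretely, I define the pseudo-interface by
\begin{equation}\label{e:plan_ansatz}
z(s,t)=z_0(s)+tz_1(s)+\tfrac12 t^2 z_2(s),
\end{equation}
where $z_1(s)$ is the normal Muskat velocity at $t=0$ induced on the graph of $z_0$ by the right-hand side of \eqref{evolz} (rewritten as a non-local operator acting on $\tilde z_0$, subtracting off the $\beta s$ slope), and $z_2(s)=T[z_0](s)+c(s)\tfrac{1-(\partial_sz_0)^2}{(1+(\partial_sz_0)^2)^{1/2}}\kappa_0(s)$ as in the introduction. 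Then I set $\Omega^\pm(t),\Omega_{mix}(t)$ as in \eqref{omega}, declare $\rho=\pm1$ in $\Omega^\pm$, and take the linear interpolation $\rho(x,t)=\tfrac{x_2-z(x_1,t)}{c(x_1)t}$ inside $\Omega_{mix}$, which extends continuously to $\rho=\pm1$ on $\partial\Omega^\pm$. The velocity $u$ is recovered from $\div u=0$, $\curl u=-\partial_{x_1}\rho$ by Biot--Savart, and the momentum $m$ is then chosen so that $\partial_t\rho+\div m=0$ holds in the sense of distributions and condition (ii) of Definition \ref{d:subsol} is satisfied.

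The first step is to verify that with the assumed regularity $\tilde z_0\in C^{3,\alpha}_*$ and $\partial_s c\in C^{\alpha}_*$, the functions $z_1,z_2$ lie in $C^{1,\alpha}_*(\R)$, so that the pseudo-interface \eqref{e:plan_ansatz} belongs to $C^2([0,T];C^{1,\alpha}(\R))$. This reduces to Hölder-continuity estimates on the non-local operators of Muskat type that define $z_1$ and $T[z_0]$, performed in the weighted spaces $C^{k,\alpha}_*$; these follow from the Calderón commutator-type bounds already used in \cite{sze:ipm, fsz:ipm}, adapted to account for the affine part $\beta s$.

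The second and main step is to verify the strict inequality \eqref{str} inside $\Omega_{mix}$. Parametrising $\Omega_{mix}$ by $(s,\lambda)$ with $x_1=s$, $x_2=z(s,t)+\lambda c(s)t$ and $\lambda\in(-1,1)$, the pointwise condition \eqref{str} reduces, after substitution of the candidate $(\rho,u,m)$, to an inequality of the form
\begin{equation}\label{e:plan_key}
\bigl|\mathbf{A}(s,t,\lambda)\bigr|^2 \;<\; \bigl(1-\lambda^2\bigr)\,c(s)^2,
\end{equation}
where $\mathbf{A}$ depends on $\partial_t z$, $\partial_s z$, $\partial_s c$ and the trace of the Biot--Savart field $u$ on the level sets of $\Omega_{mix}$. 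At $t=0$ the mixing zone is an infinitesimal strip around $\Gamma_0$; the choice of $z_1$ cancels the $O(1)$ contribution of $u$ on $\Gamma_0$ (this is the defining property of the Muskat velocity), while the quadratic correction $z_2$ produces the explicit $c\,\kappa_0\,\tfrac{1-(\partial_sz_0)^2}{(1+(\partial_sz_0)^2)^{1/2}}$ term which exactly matches the leading geometric deformation of $\Omega_{mix}$; what remains is a strictly negative slack controlled by $(1-\lambda^2)(c^2-\sup c(s)^2+\text{error})>0$ since $\sup c<2$. By continuity of all quantities involved and compactness on any strip $|s|\leq R$, inequality \eqref{e:plan_key} then persists on a short time interval $[0,T]$.

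The main obstacle, and the only genuinely new point compared to \cite{fsz:ipm}, is handling $|s|\to\infty$ when $c(s)\to 0$ and $\beta=0$. There the slack $(1-\lambda^2)c(s)^2$ degenerates, so one has to prove that the error terms in \eqref{e:plan_key}, which arise from $\partial_s c$ and from the tails of the Biot--Savart integral, decay at least as fast as $c(s)^2$. This is exactly where the hypothesis $c(s)\geq c_{min}(1+|s|^{2\alpha/3})^{-1}$ enters: the Hölder weight $(1+|s|^{1+\alpha})$ in the $C^{k,\alpha}_*$ norms guarantees that $\partial_s z,\,\partial_t z,\,\partial_s c$ decay like $(1+|s|)^{-(1+\alpha)}$, and the exponent $2\alpha/3$ is chosen so that $c(s)^{-2}$ is still absorbed by this decay with room to spare. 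Once \eqref{e:plan_key} is established globally, condition (iv) of Definition \ref{d:subsol} holds with strict inequality on $\Omega_{mix}$, and the existence of infinitely many admissible weak solutions follows from Theorem~\ref{t:mixing}.
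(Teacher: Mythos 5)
Your skeleton (second-order ansatz in $t$, mixing zone \eqref{omega}, verification of \eqref{str}, then Theorem \ref{t:mixing}) matches the paper's strategy, but two essential ingredients of the actual proof are missing. First, the construction of the flux corrector $m$ is not carried out. Replacing the paper's $2N-1$ piecewise-constant density levels \eqref{e:densN} by the linear interpolation $\rho=\frac{x_2-z}{c(x_1)t}$ changes the whole analysis: in the paper the vorticity is concentrated on finitely many curves $\Gamma^{(\pm i)}$, so $u$ is a finite sum of contour integrals to which the weighted-transform lemmas (Lemmas \ref{l:sing0}--\ref{l:I2}) apply, and $m$ is built from stream functions $g^{(i)}$ determined by explicit jump conditions \eqref{e:jump}, with the threshold $\sup c<\frac{2N-1}{N}$ forcing $N$ large to reach all $\sup c<2$. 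With your continuous profile the vorticity $-\partial_{x_1}\rho$ is spread through the strip and contains terms of size $\frac{|\partial_s z|}{ct}+\frac{|\partial_s c|}{ct}$, and condition (ii) becomes a degenerate divergence problem for $\gamma$ with $|\gamma|<\tfrac12$ in a strip whose weight $1-\rho^2$ vanishes on the boundary; your claimed reduction to $|\mathbf{A}|^2<(1-\lambda^2)c^2$ is asserted, not derived, and nowhere does $\sup c<2$ enter your argument quantitatively (the written slack ``$c^2-\sup c(s)^2+\text{error}$'' is nonpositive as it stands). Also, the coefficient of the curvature term in $z_2$ depends on the density profile chosen (for the paper's profile it is $\bar c=\frac{2N+1}{3N}c$, see \eqref{e:barc}), so importing the constant-$c$ formula of \cite{fsz:ipm} verbatim needs justification.

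The decisive gap is the degenerate case $\inf_s c=0$, where your explanation of the hypothesis $c(s)\geq c_{min}(1+|s|^{2\alpha/3})^{-1}$ — that the pointwise decay $(1+|s|)^{-(1+\alpha)}$ of the errors absorbs $c^{-2}$ — is not the true mechanism and does not suffice. In the construction the dangerous quantity is not a pointwise error but an antiderivative in $s$ of the mismatch between $\partial_t z$ and the averaged normal velocity, divided by the strip width $\sim c(s)t$ (this is the origin of condition \eqref{z2N} with its weight $tc^{3/2}(s)$). From a pointwise bound $o(t)(1+|s|)^{-(1+\alpha)}$ on the integrand one only gets that $\int_0^s(\cdots)\,ds'$ is $o(t)$ uniformly, with no decay in $s$, which is useless where $c(s)\to 0$; the pure ansatz $z_0+tz_1+\tfrac12 t^2z_2$ therefore does not close. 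The paper repairs this by adding correction terms $\psi_1(t)f_1(s)+\psi_2(t)f_2(s)$ and choosing $\psi$ through a Cauchy--Lipschitz ODE so that the mismatch has zero average on each half-line for all $t$ (Proposition \ref{p:z}); only then can $\int_0^s$ be rewritten as a tail integral, gaining the decay $(1+|s|^{\alpha})^{-1}$, which is exactly what $c^{3/2}(s)\gtrsim(1+|s|^{\alpha})^{-1}$ — i.e.\ the exponent $2\alpha/3$ — is calibrated to match. Your proposal contains no analogue of this correction, so the verification of the subsolution inequality fails for large $|s|$ when $c$ degenerates.
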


Observe that under the conditions in the theorem the function $c$ has limits at infinity $s\to\pm\infty$.

The paper is organised as follows. In Section \ref{s:sub} we show that an admissible subsolution exists provided certain smallness conditions are satisfied on the temporal expansion of the pseudo-interface - see Proposition \ref{p:SubsN}. This section closely follows the construction in \cite{sze:ipm}, in particular the construction of symmetric piecewise constant densities in \cite[Section 5]{sze:ipm}. 

Then in Section \ref{s:u} we obtain a regular expansion in time $t$ for the normal component of the velocity across interfaces for arbitrary mixing speeds. Our key result in this section is Proposition \ref{p:expansion}, see also  Remark \ref{r:expansion} for a simplified statement. It is worth pointing out that validity of the expansion requires minimal smoothness assumptions on the pseudo-interface and, at variance with the approach in \cite{acf:2020}, does not degenerate as $c\to 0$ or $c\to 2$. Finally, in Section \ref{s:z} we complete the proof of Theorem \ref{Mth}. 

We remark in passing that if $\beta\neq 0$, the statement of the theorem continues to hold provided the lower bound on $c(s)$ is strengthened to 
$$
c(s)\geq c_{min}(1+|s|^{\alpha/2})^{-1}.
$$
The proof of this requires minor modifications in Proposition \ref{p:SubsN}, in particular replacing the term $c^{3/2}$ in \eqref{z2N} by $c^2$. As such modifications unnecessarily complicate the presentation without added value, we chose not to include the details here.

\subsection*{Acknowledgments} 
This project has received funding from the European Research Council (ERC) under the European Union’s Horizon 2020 research and innovation programme (Grant Agreement No. 724298).
This work was initiated during the visit of the authors to the Hausdorff Research Institute for Mathematics (HIM), University of Bonn, in March 2019. This visit was supported by the HIM. Both the support and the hospitality of HIM are gratefully acknowledged.


\section{Subsolutions with variable mixing speed}\label{s:sub}

We start by fixing $N\in \N$ and setting
\begin{equation}\label{e:ci}
c_i(s)=\frac{2i-1}{2N-1}c(s)\quad\textrm{ for }i=1,\dots,N.
\end{equation}
Define the density $\rho(x,t)$ to be the piecewise constant function
\begin{align}\label{e:densN}
\rho(x,t) = \begin{cases} 1 & x \in \Omega^+(t),\\ \frac{i}{N} & c_i(x_1)t<x_2-z(x_1,t)<c_{i+1}(x_1)t\\ 0& -c_1(x_1)t<x_2-z(x_1,t)<c_1(x_1)t\\ -\frac{i}{N} & c_i(x_1)t<z(x_1,t)-x_2<c_{i+1}(x_1)t\\-1 & x \in \Omega^-(t), \end{cases}
\end{align}
with $i=1,\dots,N-1$. This definition of $\rho$ already determines the velocity $u$ by the kinematic part of \eqref{IPMsub}, namely the Biot-Savart law (see Section \ref{s:u} below)
\begin{equation}\label{e:BS}
\begin{split}
\div u&=0,\\
\curl u&=-\partial_{x_1}\rho.
\end{split}
\end{equation}
Observe that $\rho$ is piecewise constant, with jump discontinuities across $2N$ interfaces 
\begin{equation}\label{e:Gammapm}
\Gamma^{(\pm i)}(t)=\left\{ (s,z^{(\pm i)}(s,t)):\,s\in\R\right\},\quad z^{(\pm i)}(s,t)=z(s,t)\pm c_i(s)t.
\end{equation}
It is well known \cite{cg:condynipm} that, provided the interfaces are sufficiently regular, the solution $u$ to \eqref{e:BS} is then globally bounded, smooth in $\R^2\setminus\bigcup_{i}\Gamma^{(i)}$ with well-defined traces on $\Gamma^{\pm}$, and the normal component 
\begin{equation}\label{e:unuN}
u_{\nu}^{(i)}(s,t):=u(s,z^{(i)}(s,t),t)\cdot \begin{pmatrix}-\partial_{s}z^{(i)}(s,t)\\ 1\end{pmatrix}
\end{equation}
is continuous across the interfaces $\Gamma^{(i)}$ for $i=\pm 1,\dots,\pm N$.

Indeed, we will see in the next section that 
this is the case provided 
\begin{equation}\label{e:zc}
z(s,t)=\beta s+\tilde{z}(s,t),
\end{equation}
with $\tilde z(\cdot,t) \in C^{1,\alpha}_*(\R)$, $\partial_sc\in C^{\alpha'}_*(\R)$ and some $0<\alpha,\alpha'<1$ and $\beta\in \R$.

Our main result in this section is as follows:

\begin{proposition}\label{p:SubsN}
Let $z(s,t)$ and $c(s)$ be as in \eqref{e:zc} with $\tilde z\in C^1([0,T];C^{1,\alpha}_*(\R))$, $\partial_sc\in C^{\alpha'}_*(\R)$ and $0<c(s)\leq c_{max}$ $\forall s$ for some $c_{max}<\frac{2N-1}{N}$. Let $\rho$ be defined by \eqref{e:densN} and $u$ the corresponding velocity field $u$ according to \eqref{e:BS}, with normal traces as in \eqref{e:unuN}. Assume that
\begin{align}
&\lim_{t\to 0}\sup_{s}\frac{1}{c(s)}\left|\partial_tz(s,t)-u_{\nu}^{(\pm i)}(s,t)\right|=  0\,\textrm{ for all }i\,, \label{z1N}\\
&\lim_{t\to 0}\sup_s\frac{1}{tc^{3/2}(s)}\left|\int_0^s\partial_tz(s',t)-\sum_{i=1}^N\frac{u_\nu^{(+i)}(s',t)+u_{\nu}^{(-i)}(s',t)}{2N}\,ds'\right| =0\,, \label{z2N}
\end{align} 
and furthermore, there exists $M>0$ such that
\begin{equation}\label{z3N}
|\partial_s z(s,t)|,|\partial_s c(s)|\leq Mc^{1/2}(s)\quad\forall\,s\in\R, t\in[0,T].
\end{equation}
Then there exists $T'\in (0,T]$ and a vectorfield $m:\R^2\times(0,T')\to\R^2$ such that $(\rho,u,m)$ is an admissible subsolution on $[0,T')$ with mixing zone $\Omega_{mix}$ given by \eqref{omega}.
\end{proposition}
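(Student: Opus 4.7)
The plan is to follow the piecewise-constant construction of \cite[Section 5]{sze:ipm}, adapted here to variable mixing speed. Since $\rho$ is piecewise constant, the distributional equation $\partial_t\rho+\div m=0$ reduces to two requirements: (a) $\div m=0$ inside each strip where $\rho$ is constant, and (b) the Rankine--Hugoniot conditions $[m\cdot\nu^{(\pm i)}]=[\rho]\,\partial_tz^{(\pm i)}$ across each interface $\Gamma^{(\pm i)}$, where $\partial_tz^{(\pm i)}=\partial_tz\pm c_i(s)$. The Biot--Savart part of \eqref{IPMsub} holds automatically by definition \eqref{e:BS}.

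I would write $m=\rho u+w$. Since $\rho$ is constant in each strip and $\div u=0$, condition (a) reduces to $\div w=0$ in each strip. Using the continuity of $u_\nu^{(\pm i)}$ across $\Gamma^{(\pm i)}$ recalled before the proposition, condition (b) becomes the prescribed normal jumps
\[
[w\cdot\nu^{(\pm i)}]_{\Gamma^{(\pm i)}}=\tfrac{1}{N}\bigl((\partial_tz-u_\nu^{(\pm i)})\pm c_i(s)\bigr),
\]
which by \eqref{z1N} are dominated by $\pm c_i/N$ as $t\downarrow 0$. The pointwise constraint \eqref{str} at $\rho=i/N$, $|i|<N$, is equivalent to requiring $w$ to lie in the closed disk of radius $\tfrac12(1-i^2/N^2)$ centered at $(0,-\tfrac12(1-i^2/N^2))$; its vertical extent is $1-i^2/N^2$, minimized at $|i|=N-1$ with value $(2N-1)/N^2$.

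Next I would use the symmetric ansatz of \cite[Section 5]{sze:ipm}: within each strip take $w$ essentially as a piecewise-constant vertical vector obtained by summing, from $\Omega^{+}$ downward, the prescribed jumps, plus a small horizontal correction enforcing $\div w=0$. The symmetric choice of levels \eqref{e:ci} makes the $\pm c_i/N$ contributions cancel across the paired interfaces $\Gamma^{(+i)}$ and $\Gamma^{(-i)}$, so that the residual constraint from $\div w=0$ after integration in $x_1$ reduces precisely to the averaged condition \eqref{z2N}. The geometric hypothesis \eqref{z3N} is then used to ensure the slopes $\partial_sz$ and the derivative $\partial_sc$ do not spoil the estimates, keeping the resulting horizontal component of $w$ of size $O(tc^{3/2})$.

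The main obstacle is verifying the \emph{strict} inequality in \eqref{str} throughout $\Omega_{mix}$. The slack available in the outermost (tightest) strip is $(2N-1)/N^2-c_{\max}/N>0$ by the hypothesis $c_{\max}<(2N-1)/N$, while all the remaining contributions to $w$ are either of order $o(c)/N$ (from \eqref{z1N}), of order $O(tc^{3/2})$ (from \eqref{z2N}), or controlled geometric corrections (from \eqref{z3N}), each of which vanishes in the relevant norm as $t\downarrow 0$. Choosing $T'\in(0,T]$ small enough that all such perturbations remain uniformly below the slack in $s\in\R$ yields the admissible subsolution $(\rho,u,m)$ on $[0,T')$.
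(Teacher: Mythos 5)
Your proposal follows essentially the same route as the paper: reduce the linear system to the divergence--free condition plus Rankine--Hugoniot jumps on the $2N$ interfaces, build the correction field in the outer strips by summing the prescribed jumps inward from $\Omega^{\pm}$, and close the middle strip $\Omega^{(0)}$ using the averaged condition \eqref{z2N}. The paper's only bookkeeping difference is to write $m=\rho u-(1-\rho^2)(\gamma+\tfrac12 e_2)$ and represent $\gamma=\nabla^\perp g^{(i)}$ by a stream function in each strip (taking $g^{(i)}$ a function of $x_1$ alone for $i\neq0$ and linearly interpolating the boundary data across $\Omega^{(0)}$), so that $\div\gamma=0$ is automatic, the jump conditions become conditions on the tangential derivatives of $g^{(i)}$, and \eqref{str} reduces to $|\nabla g^{(i)}|<\tfrac12$ -- which then follows from the explicit computations $\partial_{x_1}g^{(i)}=-\tfrac12+c(s)\bigl(\tfrac{N}{2N-1}+o(1)\bigr)$ and $\partial_{x_2}g^{(0)}=c^{1/2}(s)\,o(1)$, the latter being exactly where the $c^{3/2}$ weight in \eqref{z2N} and the bound \eqref{z3N} enter, yielding the strict inequality whose slack you identify.
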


\begin{proof}
The proof follows closely the proof of Theorem 5.1 in \cite{sze:ipm}, adapted here to the variable speed setting.
Set
\begin{align*}
m = \rho u - (1-\rho^2) (\gamma + \tfrac{1}{2}e_2)
\end{align*}
for some $\gamma=\gamma(x,t)$, with $\gamma\equiv 0$ in $\Omega^{\pm}$. Then \eqref{str} amounts to the condition
$$
|\gamma| < \frac{1}{2}\quad\textrm{ in }\Omega_{mix},
$$
whereas \eqref{IPMsub} is equivalent to $\div\gamma=0$ in $\Omega_{mix}\setminus\bigcup_{i=1}^N(\Gamma^{i}\cup\Gamma^{-i})$ together with $2(N-1)$ jump conditions
\begin{equation}\label{e:jump}
[\rho]_{\Gamma^{(i)}}\partial_tz^{(i)}+[m]_{\Gamma^{(i)}}\cdot \begin{pmatrix}\partial_{x_1}z^{(i)}\\ -1\end{pmatrix}=0\quad\textrm{ on }\Gamma^{(i)}\end{equation}
for $i=\pm 1,\dots,\pm N$,
where $[\cdot]_{\Gamma^{(i)}}$ denotes the jump on $\Gamma^{(i)}$. Observe that 
$$
\Omega_{mix}\setminus\bigcup_{i=1}^N(\Gamma^{i}\cup\Gamma^{-i})=\bigcup_{i=-N+1}^{N-1}\Omega^{(i)},
$$
with connected open sets $\Omega^{(i)}$ defined as
\begin{equation}\label{e:densni}
\begin{split}
\Omega^{(i)} &= \{z(x_1,t)+c_{i}(x_1)t < x_2 < z(x_1,t)+c_{i+1}(x_1)t\},\\
\Omega^{(-i)} &= \{z(x_1,t)-c_{i+1}(x_1)t < x_2 < z(x_1,t)-c_{i}(x_1)t\},
\end{split}	
\end{equation}
for $i=1,\dots,N-1$ and 
$$
\Omega^{(0)}= \{z(x_1,t)-c_{1}(x_1)t < x_2 < z(x_1,t)+c_{1}(x_1)t\}
$$
The divergence-free condition is then taken care by setting 
$$
\gamma=\nabla^\perp g^{(i)}\,\textrm{  in } \Omega^{(i)},\quad i=-N\dots N,
$$
where $g^{(N)}=g^{{(-N)}}=0$ and $g^{(i)}\in C^1(\overline{\Omega^i})$ for $i=-(N-1)\dots (N-1)$ are to be determined. 
Then, \eqref{str} amounts to the conditions
\begin{equation}\label{e:nablagi}
|\nabla g^{(i)}|<\frac12\quad\textrm{ in }\Omega^i\quad i=-(N-1)\dots(N-1),
\end{equation}
and \eqref{e:jump} reduces to conditions on the tangential derivatives on each interface: for any $i=1,\dots,N$ we require on $\Gamma^{(\pm i)}$
\begin{align}
\partial_\tau g^{(\pm(i-1))}&=\frac{1}{1-(\frac{i-1}{N})^2}\left\{\frac{c_i}{N}-\frac{2i-1}{2N^2}+\left(1-(\tfrac{i}{N})^2\right)\partial_{\tau}g^{(\pm i)}\pm \frac{\partial_tz-u_\nu^{(\pm i)}}{N}\right\}\notag\\
&=h^{(\pm i)}+\frac{1-(\tfrac{i}{N})^2}{1-(\frac{i-1}{N})^2}\partial_{\tau} g^{(\pm i)}\quad\textrm{ on }\Gamma^{\pm i}\,,\label{e:defh}
\end{align}
with
$$
h^{(\pm i)}(s,t):=\frac{1}{1-(\frac{i-1}{N})^2}\left\{\frac{c_i(s)}{N}-\frac{2i-1}{2N^2}\pm \frac{\partial_tz(s,t)-u_\nu^{(\pm i)}(s,t)}{N}\right\}
$$
and 
$$
\partial_\tau g^{(i)}\big\vert_{\Gamma^{(j)}}(s,t):=\partial_{s}\bigl[g^{(i)}(s,z^{(j)}(s,t),t)\bigr].
$$
Next, for $i\neq 0$ we make the choice that $g^{(i)}$ is a function of $x_1,t$ only. Then, using the fact that $g^{(\pm N)}=0$, we may use \eqref{e:defh} to inductively define $g^{(\pm (N-1))}$, $g^{(\pm (N-2))},\dots,g^{(\pm 1)}$ as
\begin{align}
g^{(\pm i)}(x_1,t)&=\int_0^{x_1} h^{(\pm (i+1))}(s',t)+\frac{1-(\tfrac{i+1}{N})^2}{1-(\frac{i}{N})^2}\partial_{\tau} g^{(\pm (i+1))}(s',t)\,ds'.\label{e:defgi}
\end{align}
In particular we obtain
\begin{equation*}
	\begin{split}
		\partial_{x_1}g^{(\pm i)}&=\frac{1}{1-(\tfrac{i}{N})^2}\sum_{j=i+1}^N\left(\frac{c_j}{N}-\frac{2j-1}{2N^2}\pm \frac{\partial_t z-u_{\nu}^{(\pm j)}}{N}\right)\\
		&=\frac{N}{2N-1}\left(c-\frac{2N-1}{2N}\right)\pm \frac{N}{N^2-i^2}\sum_{j=i+1}^N[\partial_tz-u_{\nu}^{(j)}]\\
		&=-\frac{1}{2}+c(s)\left(\frac{N}{2N-1}+o(1)\right),
	\end{split}
\end{equation*}
where $o(1)$ denotes terms going to zero uniformly in $s$ as $t\to 0$ and we have used \eqref{z1N} in the last line. Since we also set $\partial_{x_2}g^{(i)}=0$, and $0<c(s)\leq c_{max}< \tfrac{2N-1}{N}$, we can deduce that \eqref{e:nablagi} holds for sufficiently small $t>0$. 

Next, we turn our attention to $g^{(0)}$ on $\Omega^{(0)}$. 
For $s\in\R$, $t\in (0,T)$ and $\lambda\in [-1,1]$ define
\begin{equation}\label{e:hatg}
\hat{g}(s,\lambda,t):=g^{(0)}(s,z(s,t)+\lambda c_1(s)t,t).
\end{equation}
In order to satisfy \eqref{e:defh} we set
$$
\hat{g}(s,\pm 1,t):=\int_{0}^s h^{(\pm 1)}+\left(1-\frac{1}{N^2}\right)\partial_{x_1}g^{(\pm 1)}\,ds',
$$
and, more generally, for $\lambda\in [-1,1]$ 
\begin{equation*}
\hat{g}(s,\lambda,t):=\frac{1+\lambda}{2}\hat{g}(s,1,t)+\frac{1-\lambda}{2}\hat{g}(s,-1,t).
\end{equation*}
Then
\begin{align*}
\partial_{\lambda}\hat{g}&=\int_0^s\sum_{j=1}^N\left(\partial_tz-\frac{u_\nu^{(+j)}+u_{\nu}^{(-j)}}{2}\right)\,ds'\\
&=tc^{3/2}(s)o(1),\\
\partial_s\hat{g}&=-\frac{1}{2}+\frac{1}{N}\sum_{j=1}^Nc_j+\frac{\lambda+1}{2N}\sum_{j=1}^N[\partial_t z - u_\nu^{(+j)}] + \frac{\lambda-1}{2N} \sum_{j=1}^N[\partial_t z - u_\nu^{(-j)}]\\
&=-\frac{1}{2}+c(s)\left(\frac{N}{2N-1}+o(1)\right)
\end{align*}
where we have used \eqref{z1N}-\eqref{z2N}. 
By differentiating \eqref{e:hatg} with respect to $s$ and $\lambda$ and using \eqref{z1N}-\eqref{z2N} we deduce
\begin{equation}\label{e:gradg}
\begin{split}
\partial_{x_2}g^{(0)}&=c^{1/2}(s)o(1)\\
\partial_{x_1}g^{(0)}&=-\frac{1}{2}+c(s)(\tfrac{N}{2N-1}+o(1))+c^{1/2}(s)(|\partial_{s}z|+t|\partial_{s}c|)o(1)\\
&=-\frac{1}{2}+c(s)(\tfrac{N}{2N-1}+o(1))
\end{split}
\end{equation}
where we have used \eqref{z3N} in the last line. Consequently 
\begin{align*}
|\nabla g|^2&=\frac14-c(s)(\tfrac{N}{2N-1}+o(1))+c^2(s)(\tfrac{N}{2N-1}+o(1))^2+c(s)o(1)\\
&=\frac14-c(s)(\tfrac{N}{2N-1}+o(1))+c^2(s)(1+o(1))\\
&\leq \frac14-\tfrac{N}{2N-1}c(s)\bigl(1-\tfrac{N}{2N-1}c_{max}+o(1)\bigr).
\end{align*}
Since $\tfrac{N}{2N-1}c_{max}<1$, we deduce that  
\begin{align*}
|\nabla g^{(0)}| < \frac{1}{2}\quad\textrm{ for sufficiently small }t>0.
\end{align*}
This concludes the proof.

\end{proof}


\section{The velocity $u$}\label{s:u}

In this section we analyse more closely the normal component of the velocity, given in \eqref{e:unuN}, where the velocity $u$ is the solution of the system \eqref{e:BS} with piecewise constant density $\rho$ given in \eqref{e:densN}. Following the computations in \cite{cg:condynipm} and \cite{fsz:ipm} we see that for any $t>0$
\begin{equation}\label{e:unui}
u_{\nu}^{(i)}(s,t)=\sum_{j}\frac{1}{2\pi N}PV\int_{\R}\frac{\partial_sz^{(j)}(s-\xi,t)-\partial_sz^{(i)}(s,t)}{\xi}\Phi_{ij}(\xi,s,t)\,d\xi,
\end{equation}
where the sum is over $j=\pm 1,\dots,\pm N$, the kernels $\Phi_{ij}(\xi,s,t)$ are defined as
\begin{equation}\label{e:defPhiij}
	\Phi_{ij}(\xi,s,t)=\frac{\xi^2}{\xi^2+(z^{(i)}(s,t)-z^{(j)}(s-\xi,t))^2}\,,
\end{equation}
and 
\begin{equation}\label{e:defzj}
z^{(i)}(s,t)=z(s,t)+c_i(s)t. 
\end{equation}
with the convention $c_{-i}(s)=-c_i(s)$, where $c_i(s)$ is defined in \eqref{e:ci} for $i=1,\dots,N$.
The principal value integral here refers to $PV\int_{\R} = \underset{R\rightarrow \infty}{lim}\int_{-R}^R$. Next, we recall the operator $T_\Phi$ from \cite{sze:ipm}, a weighted version of the Hilbert transform, defined for a weight function $\Phi=\Phi(\xi,s)$ as
\begin{equation}\label{e:defT}
    T_{\Phi}(g)(s):=\frac{1}{2\pi}PV\int_{\R}{\frac{\partial_sg(s-\xi)-\partial_sg(s)}{\xi}\Phi(\xi,s)d\xi}.
\end{equation}
Then \eqref{e:unui} can be written as
\begin{equation}\label{e:unui2}
u_{\nu}^{(i)}=\frac{1}{N}\sum_jT_{\Phi_{ij}}z^{(j)}+\frac{t}{N}\sum_{j\neq i}(\partial_sc_i-\partial_sc_j)I_{ij},
\end{equation}
where we set
\begin{equation}\label{e:Iij}
I_{ij}(s,t)=\frac{1}{2\pi}PV\int_{\R}\Phi_{ij}\frac{d\xi}{\xi}.	
\end{equation}
Observe that in $I_{ij}$ for $i\neq j$ it again suffices to consider the principal value integral as above, with regularization as $|\xi|\to \infty$. Nevertheless, for $i=j$ also a principal value regularization at $|\xi|\to 0$ is necessary - see below in Lemmas \ref{l:I1}-\ref{l:I2}.

We next recall the following bound on $T_{\Phi}$ on H\"older-spaces from \cite{fsz:ipm}, where for the weight we use the following norms: first of all we 
assume that $\Phi^{\infty}(s):=\lim_{|\xi|\to\infty}\Phi(\xi,s)$ exists, $\Phi(\cdot,s)\in C^1(\R\setminus\{0\})$, and set
\begin{equation}\label{e:farfieldPhi}
\begin{split}
\bar{\Phi}&=\xi\left(\Phi-\Phi^{\infty}\right),\quad \Phi^{\infty}=\lim_{|\xi|\to\infty}\Phi(\xi,s),\\
\tilde\Phi&=\xi^2\partial_{\xi}\left(\frac{1}{\xi}\Phi\right)=\xi\partial_{\xi}\Phi-\Phi\,.
\end{split}
\end{equation}
We introduce the norms
\begin{align*}
\vvvert \Phi\vvvert_0&:=\sup_{s\in\R,|\xi|\leq 1}|\Phi(\xi,s)|+\sup_{s\in\R,|\xi|>1}(|\bar{\Phi}(\xi,s)|+|\tilde\Phi(\xi,s)|),\\
\vvvert\Phi\vvvert_{k,\alpha}&:=\max_{j\leq k}\vvvert \partial_s^j\Phi\vvvert_0+[\partial_s^k\Phi]_\alpha+\sup_{|\xi|>1}([\partial_s^k\bar{\Phi}(\xi,\cdot)]_\alpha+[\partial_s^k\tilde\Phi(\xi,\cdot)]_\alpha),
\end{align*}
where we use the convention that $\|\Phi(\xi,\cdot)\|$ denotes a norm in the second argument only and $\|\Phi\|$ denotes a norm joint in both variables. In particular the H\"older-continuity of $\partial_s^k\Phi$ in both variables $\xi,s$ is required in the norm $\vvvert\Phi\vvvert_{k,\alpha}$. Accordingly, we define the spaces
\begin{align*}
\mathcal{W}^0&=\{\Phi\in L^{\infty}(\R^2):\,\Phi^\infty\textrm{ and }\partial_\xi\Phi\textrm{ exist, with }\vvvert \Phi\vvvert_0<\infty\},\\
\mathcal{W}^{k,\alpha}&=\{\Phi\in \mathcal{W}^0:\,\vvvert \Phi\vvvert_{k,\alpha}<\infty\}
\end{align*}
Then, the following version of the classical estimate on the Hilbert transform $T_1=\mathcal{H}\nabla$ on H\"older-spaces holds \cite[Theorem 3.1]{fsz:ipm}:

\begin{theorem}\label{t:TPhi}
For any $\alpha>0$, $f\in C^{1,\alpha}_*(\R)$ and $\Phi\in \mathcal{W}^0$ we have
\begin{equation}\label{e:TPhi1}
\|T_{\Phi}(f)\|_0^*\leq C \vvvert\Phi\vvvert_0\|f\|_{1,\alpha}^*\,.
\end{equation}
Moreover, for any $k\in\N$, $f\in C_*^{k+1,\alpha}(\R)$ and $\Phi\in \mathcal{W}^{k,\alpha}$ 
\begin{equation}\label{e:TPhi2}
\|T_{\Phi}(f)\|_{k,\alpha}^*\leq C\vvvert\Phi\vvvert_{k,\alpha}\|f\|_{k+1,\alpha}^*.
\end{equation}
where the constant depends only on $k$ and $\alpha$. 
\end{theorem}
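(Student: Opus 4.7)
The plan is to mimic the classical proof that the Hilbert transform is bounded on $C^{\alpha}$, adapted to accommodate (i) the weight $(1+|s|^{1+\alpha})$ built into the starred norms and (ii) the variable kernel $\Phi(\xi,s)$. The control on $\Phi$ near $\xi=0$, the decomposition $\Phi=\Phi^{\infty}+\bar\Phi/\xi$ at infinity, and the identity $\tilde\Phi=\xi\partial_\xi\Phi-\Phi$ will together allow a clean separation of the singular and tail contributions.

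First I would split $T_{\Phi}(f)(s)=I_{\mathrm{near}}(s)+I_{\mathrm{far}}(s)$, where $I_{\mathrm{near}}$ integrates over $|\xi|\leq 1$ and $I_{\mathrm{far}}$ over $|\xi|>1$. For $I_{\mathrm{near}}$, the H\"older continuity of $\partial_s f$ and the pointwise bound on $\Phi$ on $|\xi|\leq 1$ yield an integrand bounded by $\vvvert\Phi\vvvert_0\,[\partial_s f]_\alpha^*\,|\xi|^{\alpha-1}$, which is integrable on $[-1,1]$. For $I_{\mathrm{far}}$, write $\Phi=\Phi^{\infty}+\bar\Phi/\xi$: the $\bar\Phi/\xi$ piece produces an integrand with $1/\xi^{2}$ times bounded quantities, directly integrable; the $\Phi^{\infty}(s)$ piece, after discarding the vanishing $PV$ of $\partial_s f(s)/\xi$ by odd symmetry, reduces to $\Phi^{\infty}(s)\cdot PV\int_{|\xi|>1}\partial_s f(s-\xi)/\xi\,d\xi$, controlled by $\|\partial_s f\|_{L^{1}(\R)}$, which is finite since $\|f\|_{1,\alpha}^*<\infty$ and $\alpha>0$.

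To upgrade this to the weighted bound $|T_\Phi(f)(s)|\lesssim (1+|s|^{1+\alpha})^{-1}\|f\|_{1,\alpha}^*\vvvert\Phi\vvvert_0$, I would treat $|s|\leq 2$ by the preceding estimate and, for $|s|\geq 2$, refine the $\xi$-decomposition into three regions: (a) $|\xi|\leq |s|/2$, where $|s-\xi|\geq |s|/2$ and the weighted bound on $\partial_s f$ directly produces the factor $|s|^{-(1+\alpha)}$; (b) $|s|/2\leq|\xi|\leq 2|s|$, where the H\"older seminorm $[\partial_s f]_\alpha^*$ is applied against the weight at the displaced point $s-\xi$; and (c) $|\xi|\geq 2|s|$, where the decomposition $\Phi=\Phi^{\infty}+\bar\Phi/\xi$ combined with the rapid decay of $\partial_s f$ yields the required rate, with $\bar\Phi$ contributing an extra $1/\xi$ factor. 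The identity $\tilde\Phi=\xi\partial_\xi\Phi-\Phi$ justifies an integration by parts in $\xi$ whenever one needs to trade a derivative between $\Phi$ and $f$ to extract additional decay.

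For the higher-order estimate \eqref{e:TPhi2}, differentiate $T_\Phi(f)$ in $s$: derivatives fall onto $f$ or onto $\Phi(\xi,s)$, so schematically $\partial_s T_\Phi(f)=T_\Phi(\partial_s f)+T_{\partial_s\Phi}(f)$, and the bound follows by induction on $k$ from the previously established case. The weighted H\"older seminorm $[\partial_s^k T_\Phi(f)]_\alpha^*$ is obtained by the classical difference-quotient trick: write $T_\Phi(f)(s)-T_\Phi(f)(s')$, split into $|\xi|\leq 2|s-s'|$ (use direct boundedness) and $|\xi|>2|s-s'|$ (use mean-value-type control from $\partial_s f\in C^{\alpha}$ and from $\vvvert\Phi\vvvert_{k,\alpha}$), and combine. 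The main obstacle is securing the correct decay $|s|^{-(1+\alpha)}$ in region (c): a naive bound would produce a spurious logarithm from the $PV$ structure of $1/\xi$ tested against merely bounded functions, and this is precisely what the auxiliary quantities $\bar\Phi$ and $\tilde\Phi$ are engineered to absorb, supplying enough extra $\xi$-decay to eliminate the logarithmic loss. Carefully tracking these cancellations is the technical heart of the argument.
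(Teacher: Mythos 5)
A preliminary remark: the paper does not prove Theorem \ref{t:TPhi} at all --- it is quoted from \cite[Theorem 3.1]{fsz:ipm} --- so your outline can only be compared with the strategy of that reference. Your list of ingredients is indeed the right one and matches that strategy in spirit: splitting at $|\xi|=1$; in the far field removing the constant-at-infinity part $\Phi^{\infty}$ from the $\partial_s f(s)$-term by odd symmetry of $1/\xi$, leaving the absolutely integrable kernel $\bar\Phi/\xi^2$; an integration by parts in $\xi$, which is exactly what $\tilde\Phi=\xi^2\partial_\xi(\Phi/\xi)$ is designed for; and induction on $k$ via $\partial_s T_\Phi(f)=T_\Phi(\partial_s f)+T_{\partial_s\Phi}(f)$ together with a difference-quotient argument for the H\"older seminorm.

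As written, however, the argument has a genuine gap, and it is not merely that you stop before ``carefully tracking these cancellations''. In the weighted estimate for $|s|\geq 2$, your region (b), $|s|/2\leq|\xi|\leq 2|s|$, fails as described: there $s-\xi$ sweeps through a neighbourhood of the origin, where neither $f$ nor $\partial_s f$ is small, and the seminorm $[\partial_s f]_\alpha^*$ cannot be ``applied against the weight at the displaced point'' because it only controls increments $|\xi|\leq 1$. The direct bound $\int_{|s|/2\leq|\xi|\leq 2|s|}|\partial_s f(s-\xi)|\,|\xi|^{-1}d\xi\lesssim |s|^{-1}\|\partial_s f\|_{L^1}$ gives only $|s|^{-1}$, short of the required $|s|^{-(1+\alpha)}$. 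The cure is precisely the integration by parts you invoke only as a general principle: writing $\int_{|\xi|>1}\partial_s f(s-\xi)\,\xi^{-1}\Phi\,d\xi$ as boundary terms at $|\xi|=1$ (which decay like $(1+|s|^{1+\alpha})^{-1}$) plus $\int_{|\xi|>1} f(s-\xi)\,\tilde\Phi\,\xi^{-2}d\xi$ turns the kernel into $\xi^{-2}$ and the density into $f$ itself; then region (b) yields $|s|^{-2}$, and the range $1<|\xi|<|s|/2$ loses the logarithm you correctly anticipate. Note also that you locate the main obstacle in region (c), $|\xi|\geq 2|s|$, which is in fact harmless: there $|\partial_s f(s-\xi)|\lesssim|\xi|^{-(1+\alpha)}$ and the integrand is $\lesssim|\xi|^{-(2+\alpha)}$ directly, no $\tilde\Phi$ needed; the real trouble sits in regions (a) and (b). Since this integration by parts, its boundary terms, and the weight bookkeeping (and their $k$-th order, H\"older-in-$s$ analogues involving $[\partial_s^k\bar\Phi(\xi,\cdot)]_\alpha$ and $[\partial_s^k\tilde\Phi(\xi,\cdot)]_\alpha$) constitute the technical heart of the proof, leaving them unexecuted --- and replacing them in region (b) by a mechanism that does not work --- means your text is a reasonable plan rather than a proof.
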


In the following we analyse boundedness and continuity properties of the type of operators \eqref{e:defT} arising in \eqref{e:unui2}. This will ultimately enable us to derive an expansion in time for $t\to 0$ of the normal velocity components $u^{(i)}_\nu$ in \eqref{e:unui}.

\begin{lemma}\label{l:reg}
	Let $z(s,t)=\beta s+\tilde z(s,t)$ with $\tilde z\in C^{l}([0,T];C^{k+1,\alpha}(\R))$ for some $l,k\in\N$, $0<\alpha<1$, $\beta\in\R$ and $T<\infty$, and let 
	$$
	\Phi(\xi,s,t)=\frac{\xi^2}{\xi^2+(z(s,t)-z(s-\xi,t))^2}.
	$$
	Then $\Phi\in C^{l}([0,T];\mathcal{W}^{k,\alpha})$.
\end{lemma}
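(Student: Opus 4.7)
The plan is to introduce the auxiliary quantity
$$
R(\xi,s,t):=\frac{z(s,t)-z(s-\xi,t)}{\xi}=\int_0^1\partial_s z(s-\theta\xi,t)\,d\theta,
$$
so that $\Phi=(1+R^2)^{-1}$ and, because $\tilde z$ is bounded, $\Phi^\infty=(1+\beta^2)^{-1}$ exists. The integral form represents $R$ as a travelling average of $\partial_s z$, and differentiation under the integral sign gives
$$
\partial_s^j\partial_t^m R(\xi,s,t)=\int_0^1\partial_s^{j+1}\partial_t^m z(s-\theta\xi,t)\,d\theta
$$
for $j\leq k$, $m\leq l$. Hence $\partial_s^j\partial_t^m R$ is uniformly bounded in $(\xi,s,t)$, and at top order $j=k$ is $\alpha$-Hölder continuous jointly in $(\xi,s)$, inheriting these properties from $\partial_s^{k+1}\partial_t^m\tilde z\in C^\alpha(\R)$. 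Since $\Phi=(1+R^2)^{-1}$ is smooth in $R$ with denominator bounded below by $1$, a Faà di Bruno expansion represents each $\partial_s^j\partial_t^m\Phi$ as a polynomial in the derivatives of $R$ divided by a power of $1+R^2$, and it therefore inherits the same bounds. This controls the $\sup_{|\xi|\leq 1}$ contribution to $\vvvert\Phi\vvvert_{k,\alpha}$ together with the joint Hölder seminorm $[\partial_s^k\Phi]_\alpha$.

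For the far field $|\xi|>1$ the second representation
$$
R-\beta=\frac{\tilde z(s,t)-\tilde z(s-\xi,t)}{\xi}
$$
is the key, yielding $|R-\beta|\leq 2\|\tilde z\|_\infty|\xi|^{-1}$. Writing
$$
\bar\Phi=\xi(\Phi-\Phi^\infty)=-\frac{(R-\beta)(R+\beta)\,\xi}{(1+R^2)(1+\beta^2)}
$$
and using that $R+\beta$ and $(1+R^2)^{-1}$ are bounded shows $\bar\Phi\in L^\infty$. Taking $s$-derivatives preserves the crucial $|\xi|^{-1}$ gain, since the factor $\tilde z(s,t)-\tilde z(s-\xi,t)$ remains bounded in $L^\infty_s$ after arbitrarily many $s$-differentiations; this yields the $\sup_{|\xi|>1}|\partial_s^j\bar\Phi|$ and $C^\alpha_s$ bounds. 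The companion term $\tilde\Phi=\xi\partial_\xi\Phi-\Phi$ is handled via the identity
$$
\xi\partial_\xi R=\partial_sz(s-\xi,t)-R=\partial_s\tilde z(s-\xi,t)-(R-\beta),
$$
whose right-hand side is uniformly bounded with the same far-field decay and $s$-regularity.

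Time regularity then requires no extra work: differentiation in $t$ under the integral representation of $R$ loses no $s$- or $\xi$-regularity, so the chain-rule expression for $\Phi$ produces $\Phi\in C^l([0,T];\mathcal{W}^{k,\alpha})$. The main obstacle is essentially combinatorial bookkeeping in Faà di Bruno's formula: one must track which $s$-derivatives fall on the mean-zero factor $\tilde z(s,t)-\tilde z(s-\xi,t)$ in order to preserve the $|\xi|^{-1}$ decay needed for $\bar\Phi$ and $\tilde\Phi$ in the far field, as opposed to those landing on the smooth denominator $1+R^2$ where only boundedness is used. Once this combinatorics is organised, every bound reduces to a direct term-by-term estimate using $\|\tilde z\|_{C^{k+1,\alpha}}$ and the uniform lower bound $1+R^2\geq 1$.
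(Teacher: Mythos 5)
Your proposal is correct and takes essentially the same approach as the paper: write $\Phi=K(R)$ with $K(x)=(1+x^2)^{-1}$ and $R=\int_0^1\partial_sz(s-\theta\xi,t)\,d\theta$, use the chain rule together with the travelling-average representation for the near field, and extract the bounded factor $\tilde z(s,t)-\tilde z(s-\xi,t)=\xi(R-\beta)$ for the far-field decay. The only cosmetic difference is that the paper sidesteps the Fa\`a di Bruno bookkeeping you flag by packaging the far-field coefficient as a single smooth function $K_\beta(x)=(\beta+x)/\bigl((1+\beta^2)(1+x^2)\bigr)$ of $R$, so that $\bar\Phi=K_\beta(R)\bigl(\tilde z(s-\xi,t)-\tilde z(s,t)\bigr)$ is a clean product of two bounded-derivative factors, and the danger of an $s$-derivative splitting $\xi(R-\beta)$ apart never arises.
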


\begin{proof}
We start by introducing the following notation: for $z=z(s,t)$ define 
\begin{equation}\label{e:Zt}
Z=Z(\xi,s,t)=\frac{z(s,t)-z(s-\xi,t)}{\xi}=\int_0^1\partial_sz(s-\tau\xi,t)\,d\tau,
\end{equation}
and furthermore, let 
\begin{equation}\label{e:defK}
	K(Z)=\frac{1}{1+Z^2}.
\end{equation}
 Since $K\in C^{\infty}(\R)$ with derivatives of any order uniformly bounded on $\R$, and since 
$\Phi=K\circ Z$, it follows easily from the chain rule that, for any $j\leq l$, 
$\partial_t^j\Phi\in C^{k,\alpha}(\R^2)$ with
$$
\sup_t\|\partial_t^j\Phi(\cdot,\cdot,t)\|_{C^{k,\alpha}(\R^2)}<\infty.
$$ 
Concerning the far-field terms, note that $\Phi^{\infty}=\frac{1}{1+\beta^2}$, hence
$$
\bar{\Phi}=\frac{\beta+Z}{(1+\beta^2)(1+Z^2)}(\tilde{z}(s-\xi,t)-\tilde{z}(s,t))=K_{\beta}(Z)(\tilde{z}(s-\xi,t)-\tilde{z}(s,t)),
$$
where $K_{\beta}(x)=\frac{\beta+x}{(1+\beta^2)(1+x^2)}$ is again a function with derivatives of all order uniformly bounded on $\R$. Therefore the chain rule as above, together with the product rule, easily imply that, for any $j\leq l$, 
$\partial_t^j\bar\Phi\in C^{k,\alpha}(\R^2)$ with
$$
\sup_t\|\partial_t^j\bar\Phi(\cdot,\cdot,t)\|_{C^{k,\alpha}(\R^2)}<\infty.
$$ 
Similarly, $\xi\partial_\xi\Phi=K'(Z)\xi\partial_\xi Z$,
where
$$
\xi\partial_\xi Z=\partial_s\tilde{z}(s-\xi,t)+\int_0^1\partial_s\tilde{z}(s-\tau\xi,t)\,d\tau,
$$
so that, once again for any $j\leq l$, 
$\xi\partial_\xi(\partial_t^j\Phi)=\partial_t^j(\xi\partial_\xi\Phi)\in C^{k,\alpha}(\R^2)$ with
$$
\sup_t\|\xi\partial_\xi\partial_t^j\bar\Phi(\cdot,\cdot,t)\|_{C^{k,\alpha}(\R^2)}<\infty.
$$ 
We deduce that $\partial_t^j\Phi\in \mathcal{W}^{k,\alpha}$ with
$$
\sup_{t\in[0,T]}\vvvert\partial_t^j\Phi\vvvert_{k,\alpha}<\infty
$$	
as required.
\end{proof}

Using the notation introduced above in \eqref{e:Zt}-\eqref{e:defK} we can write
\begin{equation}\label{e:ZZ}
\Phi_{ij}=K\left(Z^{(j)}+\frac{c_{ij}t}{\xi}\right),
\end{equation}
where
$$
Z^{(j)}(\xi,s,t)=\frac{z^{(j)}(s,t)-z^{(j)}(s-\xi,t)}{\xi},\quad c_{ij}(s)=c_i(s)-c_j(s).
$$
Observe that $c_{ii}=0$ so that Lemma \ref{l:reg} applies to $\Phi_{ii}$, but the second term requires more care. In the next lemmata we address boundedness and continuity with respect to the functions $z,c$. 

\begin{lemma}\label{l:sing0}
There exists a constant $C>1$ such that the following holds. Let $z(s)=\beta s+\tilde z(s)$ with $\tilde z\in C^1(\R)$, $c\in C(\R)$, and let 
 	$$
	\Phi(\xi,s)=K\left(Z(\xi,s)+\frac{c(s)}{\xi}\right),
	$$
	where $K(Z)=(1+Z^2)^{-1}$ as in \eqref{e:defK} and $Z(\xi,s)=\frac{z(s)-z(s-\xi)}{\xi}$. 
Then $\Phi\in \mathcal{W}^0$, with
\begin{equation}\label{e:W01}
\vvvert \Phi\vvvert_0\leq C(1+\|\partial_s\tilde z\|_0+\|c\|_0).
\end{equation}
Furthermore, if $\Phi_1,\Phi_2$ are defined as above with $\tilde z_1,\tilde z_2\in C^1(\R)$, then
\begin{equation}\label{e:W02}
\vvvert \Phi_1-\Phi_2\vvvert_0\leq C\|\partial_s\tilde z_1-\partial_s\tilde z_2\|_0.
\end{equation}
\end{lemma}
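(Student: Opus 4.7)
The plan is to exploit the fact that $\Phi = K\circ W$ with $W = Z + c/\xi$ and $K(y) = (1+y^2)^{-1}$, so that every ingredient of $\vvvert\Phi\vvvert_0$ can be read off from algebraic manipulations involving $K$, $K'$, and the quantities $W-\beta$ and $\tilde z(s)-\tilde z(s-\xi)$. The key structural facts are that $K,K',K''\in L^{\infty}(\R)$ and that both $K-K(\beta)$ and $K'$ admit explicit rational expressions in $W$.

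To begin, the bound on $\sup_{|\xi|\leq 1}|\Phi|$ is immediate from $K\in [0,1]$. For the far field I would first verify that $W\to\beta$ as $|\xi|\to\infty$ by writing
\[
Z(\xi,s)=\beta+\int_0^1\partial_s\tilde z(s-\tau\xi)\,d\tau,
\]
so that $\Phi^{\infty}=K(\beta)$ exists; then $\partial_\xi\Phi$ is obtained on $\{\xi\neq 0\}$ from the chain rule together with the identity $\partial_\xi W=(\partial_s z(s-\xi)-W)/\xi$, which follows directly from the same integral representation of $Z$.

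The estimates at $|\xi|>1$ then come from the elementary factorisation
\[
K(W)-K(\beta)=-\frac{(W-\beta)(W+\beta)}{(1+W^2)(1+\beta^2)},
\]
which, after multiplication by $\xi$, gives
\[
\bar\Phi=-\frac{\bigl[\tilde z(s)-\tilde z(s-\xi)+c(s)\bigr](W+\beta)}{(1+W^2)(1+\beta^2)}.
\]
The factor $(W+\beta)/(1+W^2)$ is uniformly bounded on $\R$, and the bracketed numerator is controlled in terms of $\|\partial_s\tilde z\|_0$ and $\|c\|_0$, with the uniformity in $\xi$ coming from the cancellation against $(1+W^2)$ in the denominator in the regime where the numerator would otherwise scale linearly in $|\xi|$. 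For $\tilde\Phi=\xi\partial_\xi\Phi-\Phi$ the computation is more direct: the chain-rule identity above yields $\tilde\Phi=K'(W)(\partial_s z(s-\xi)-W)-K(W)$, which is bounded factor by factor using $K,K'\in L^{\infty}$ and the boundedness of $W$ on $|\xi|>1$.

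For the Lipschitz bound \eqref{e:W02}, I would substitute two configurations $(\tilde z_j,c)$ into the same formulae. Since $K$ and $K'$ are globally Lipschitz and $W_1-W_2=Z_1-Z_2$ depends only on the differences of the shifts of $\tilde z_1-\tilde z_2$ (the common $c/\xi$ cancels), one obtains $|\Phi_1-\Phi_2|\leq C|W_1-W_2|\leq C\|\partial_s(\tilde z_1-\tilde z_2)\|_0$ pointwise, and the same scheme applied to the explicit formulae above gives the corresponding Lipschitz estimates for $\bar\Phi_1-\bar\Phi_2$ and $\tilde\Phi_1-\tilde\Phi_2$ by the product rule. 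The only genuine obstacle is ensuring the uniform-in-$\xi$ bound on $\bar\Phi$ when $|\xi|>1$: the naive bound $|\tilde z(s)-\tilde z(s-\xi)|\leq\|\partial_s\tilde z\|_0|\xi|$ is too crude, and one must borrow a factor of $|\xi|$ from the denominator via the factorisation above.
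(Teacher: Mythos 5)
Your plan coincides with the paper's proof in most respects: the bound $|\Phi|\le 1$ from $|K|\le 1$, the identity $\xi\partial_\xi Z=\partial_sz(s-\xi)-Z$ for the $\tilde\Phi$ term, and the mean-value/integral representation of $\Phi_1-\Phi_2$ with $|Z_1-Z_2|\le\|\partial_s(\tilde z_1-\tilde z_2)\|_0$ for \eqref{e:W02} are exactly the ingredients used there. However, the step you yourself single out as the only genuine obstacle --- the uniform-in-$\xi$ bound on $\bar\Phi$ for $|\xi|>1$ --- is resolved incorrectly. There is no factor of $|\xi|$ to ``borrow'' from the denominator: with $W=Z+c/\xi$ one has $|W|\le|\beta|+\|\partial_s\tilde z\|_0+\|c\|_0$ for all $|\xi|\ge 1$, so $(1+W^2)^{-1}$ is merely bounded and supplies no decay in $\xi$. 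Indeed your own factorisation gives $|\bar\Phi|=|\xi|\,|W-\beta|\,|W+\beta|\,(1+W^2)^{-1}(1+\beta^2)^{-1}$, and the only factor that can absorb $|\xi|$ is $|W-\beta|$ itself, i.e.\ one must bound $|\xi(W-\beta)|=|\tilde z(s)-\tilde z(s-\xi)+c(s)|$ directly. A concrete test: $\beta=0$, $c\equiv 0$, $\tilde z(s)=\sqrt{1+s^2}$ (so $\|\partial_s\tilde z\|_0=1$), $s=0$; then $Z\to\mp 1$ as $\xi\to\pm\infty$, so $\xi\bigl(K(Z)-K(\beta)\bigr)\sim -\xi/2$ and no cancellation from $1+Z^2$ occurs --- the quantity you are trying to control grows linearly in $|\xi|$, so the claimed mechanism cannot work with only $\|\partial_s\tilde z\|_0$ as input.

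The mechanism actually used in the paper's one-line estimate $|\bar\Phi|\le|\xi|\,|Z-\beta|+|c|$ is that $|\xi|\,|Z-\beta|=|\tilde z(s)-\tilde z(s-\xi)|$ is an increment of $\tilde z$ itself, hence controlled by $2\sup_s|\tilde z(s)|$ (in the setting where the lemma is applied, $\tilde z\in C^{1,\alpha}_*(\R)$ is bounded and decaying, so this is harmless); no gain from the denominator is needed or available. The same remark applies to your treatment of \eqref{e:W02}: since the far fields coincide, $\bar\Phi_1-\bar\Phi_2=\xi(\Phi_1-\Phi_2)$, and for $|\xi|>1$ the factor $|\xi|\,|Z_1-Z_2|=|(\tilde z_1-\tilde z_2)(s)-(\tilde z_1-\tilde z_2)(s-\xi)|$ must again be bounded as an increment, not through a denominator cancellation. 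A minor further point: in your factor-by-factor bound for $\tilde\Phi=K'(W)\bigl(\partial_sz(s-\xi)-W\bigr)-K(W)$ you should use $\partial_sz(s-\xi)-W=\partial_s\tilde z(s-\xi)-(Z-\beta)-c/\xi$, in which $\beta$ cancels; bounding $\partial_sz$ and $W$ separately introduces an unwanted dependence on $|\beta|$ in the constant of \eqref{e:W01}.
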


\begin{proof}
Using that $|K|\leq 1$, we deduce $\sup_{\xi,s}|\Phi|\leq 1$. Moreover, since also $|K'|\leq 1$, we have
\begin{equation*}
\begin{split}
	|\bar{\Phi}|&=|\xi|\left|K(Z+\tfrac{c}{\xi})-K(\beta)\right|\leq |\xi||Z-\beta|+|c|\\
	&\leq 2\|\partial_s\tilde z\|_0+\|c\|_0.
\end{split}
\end{equation*}
Similarly, for $|\xi|\leq 1$ we also have
\begin{equation*}
|\xi\partial_\xi\Phi|=\left|K'(Z+\tfrac{c}{\xi})\right|\left|\xi\partial_\xi Z-\tfrac{c}{\xi}\right|\leq 2\|\partial_s\tilde z\|_0+\|c\|_0.	
\end{equation*}
The estimate \eqref{e:W01} follows.

For the Lipschitz bound \eqref{e:W02} we proceed entirely analogously, using the representation
$$
\Phi_1-\Phi_2=\int_0^1K'\left(\tau Z_1+(1-\tau)Z_2+\frac{c}{\xi}\right)d\tau\,(Z_1-Z_2)
$$
and the bound $|Z_1-Z_2|\leq \|\partial_s\tilde z_1-\partial_s\tilde z_2\|_0$.
\end{proof}

Next, we address continuity of the mapping $c\mapsto K(Z+\frac{c}{\xi})$ at the singularity $c=0$.

\begin{lemma}\label{l:sing1}
There exists a constant $C>1$ such that the following holds.
Let $z(s)=\beta s+\tilde z(s)$ with $\tilde z\in C(\R)$, $c\in C(\R)$, and let 
	\begin{equation*}
	\Phi(\xi,s)=K\left(Z(\xi,s)+\frac{c(s)}{\xi}\right)-K\left(Z(\xi,s)\right),
	\end{equation*}
where $K(Z)=(1+Z^2)^{-1}$ as in \eqref{e:defK} and $Z(\xi,s)=\frac{z(s)-z(s-\xi)}{\xi}$.
Then, for any $f\in C^{1,\alpha}_*(\R)$ we have 
\begin{equation}\label{e:sing1}
\|T_{\Phi}f\|_0^*\leq C[\partial_sf]_{\alpha}^*\|c\|_0^{\alpha}.
\end{equation}
\end{lemma}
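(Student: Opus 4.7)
The plan is to combine two complementary pointwise bounds on $\Phi$ and interpolate via a splitting of the integral at scale $|\xi| \sim \|c\|_0$. By the fundamental theorem of calculus,
$$
\Phi(\xi, s) \;=\; \frac{c(s)}{\xi}\int_0^1 K'\!\left(Z(\xi,s) + \tau \tfrac{c(s)}{\xi}\right) d\tau,
$$
and since both $|K|$ and $|K'|$ are uniformly bounded on $\R$, we immediately obtain the two pointwise estimates $|\Phi(\xi,s)| \leq 2$ and $|\Phi(\xi,s)| \leq C\,|c(s)|/|\xi|$. The two bounds cross at $|\xi| \sim \|c\|_0$, so I set $\delta := \|c\|_0$ and assume $\delta \leq 1$; the opposite regime is simpler, as then Lemma \ref{l:sing0} combined with $\|c\|_0 \leq (\sup c)^{1-\alpha}\|c\|_0^\alpha$ yields the result at once.

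Split the principal value integral defining $T_\Phi f(s)$ into the three regions $|\xi| \leq \delta$, $\delta < |\xi| \leq 1$, and $|\xi| > 1$. On the first, combine $|\Phi| \leq 2$ with the weighted H\"older inequality
$$
|\partial_s f(s-\xi) - \partial_s f(s)| \;\leq\; \frac{[\partial_s f]_\alpha^*\,|\xi|^\alpha}{1+|s|^{1+\alpha}}, \qquad |\xi| \leq 1,
$$
and integrate $|\xi|^{\alpha-1}$ on $[0,\delta]$ to produce a contribution bounded by $C\,[\partial_s f]_\alpha^*\,\delta^\alpha/(1+|s|^{1+\alpha}) = C\,[\partial_s f]_\alpha^*\,\|c\|_0^\alpha/(1+|s|^{1+\alpha})$. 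On the annulus $\delta < |\xi| \leq 1$, use instead the Lipschitz bound $|\Phi| \leq C\|c\|_0/|\xi|$ together with the same H\"older inequality; the integrand is of order $[\partial_s f]_\alpha^*\,\|c\|_0\,|\xi|^{\alpha-2}/(1+|s|^{1+\alpha})$, and integrating $|\xi|^{\alpha-2}$ from $\delta$ to $1$ gives $\|c\|_0\,\delta^{\alpha-1} = \|c\|_0^\alpha$, again of the right size.

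For $|\xi| > 1$, the bound $|\Phi/\xi| \leq C\|c\|_0/\xi^2$ secures absolute integrability, so no principal value cancellation is needed. The remaining task is to generate the spatial weight $(1+|s|^{1+\alpha})^{-1}$ uniformly in $s$. Writing $\partial_s f(s-\xi) - \partial_s f(s)$ as a telescoping sum over $\lceil|\xi|\rceil$ unit steps and applying the weighted seminorm bound at each step, one obtains the uniform estimate $|\partial_s f(s-\xi) - \partial_s f(s)| \leq C[\partial_s f]_\alpha^*$ (using the summability of $\sum_{k \in \Z}(1+|s-k|^{1+\alpha})^{-1}$), together with the sharper bound $C[\partial_s f]_\alpha^*\,|\xi|/(1+|s|^{1+\alpha})$ whenever $|\xi| \leq |s|/2$. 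A case split at $|\xi| = |s|/2$ then closes this region with a contribution of the form $C[\partial_s f]_\alpha^*\,\|c\|_0/(1+|s|^{1+\alpha})$, which under $\|c\|_0 \leq 1$ is dominated by $C[\partial_s f]_\alpha^*\,\|c\|_0^\alpha/(1+|s|^{1+\alpha})$.

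The main technical obstacle is ensuring that the two bounds on $\Phi$ interpolate cleanly at $|\xi| = \delta$ so as to yield the precise power $\|c\|_0^\alpha$ (rather than $\|c\|_0$ or $1$), which is exactly what forces the cutoff choice $\delta = \|c\|_0$. The far-field $|\xi| > 1$ is by comparison more mechanical, but the telescoping device is essential to extract spatial decay in $s$ from the purely local seminorm $[\partial_s f]_\alpha^*$, without invoking the stronger norm $\|f\|^*_{1,\alpha}$.
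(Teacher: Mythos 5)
Your treatment of the region $|\xi|\le 1$ is essentially the paper's own argument: the pointwise bound $|\Phi(\xi,s)|\le C\min\bigl(1,|c(s)|/|\xi|\bigr)$ coming from $|K|,|K'|\le 1$, combined with the weighted H\"older estimate for $\partial_sf$, and your splitting at $|\xi|\sim\|c\|_0$ just reproduces the computation $\int_{\R}|\xi|^{\alpha-1}\min\bigl(1,c(s)/|\xi|\bigr)\,d\xi=C_\alpha c(s)^{\alpha}$, which the paper carries out in one stroke over all of $\R$ (keeping $c(s)$ pointwise and applying the weighted H\"older bound to increments of every size, without any cutoff at $|\xi|=1$).

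The genuine gap is in your far field $|\xi|>1$. The two telescoping bounds you state are correct, namely $|\partial_sf(s-\xi)-\partial_sf(s)|\le C[\partial_sf]_\alpha^*$ in general and $\le C[\partial_sf]_\alpha^*\,|\xi|/(1+|s|^{1+\alpha})$ for $|\xi|\le |s|/2$, but they do not produce the contribution $C[\partial_sf]_\alpha^*\,\|c\|_0/(1+|s|^{1+\alpha})$ you claim. On the annulus $1<|\xi|\le |s|/2$ the integrand is only bounded by $C[\partial_sf]_\alpha^*\,\|c\|_0\,|\xi|^{-1}(1+|s|^{1+\alpha})^{-1}$, so integration costs an extra factor $\log|s|$, and after multiplying by the weight you are left with $\|c\|_0\log|s|$, which is not dominated by $\|c\|_0^{\alpha}$ uniformly in $s$. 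Worse, on $|\xi|>\max(1,|s|/2)$ only the uniform bound is available, and $\int_{|\xi|>|s|/2}\|c\|_0\,\xi^{-2}\,d\xi\sim \|c\|_0/|s|$; against the weight $1+|s|^{1+\alpha}$ this grows like $\|c\|_0\,|s|^{\alpha}$, again not controlled by $\|c\|_0^{\alpha}$. The telescoping device cannot recreate the spatial weight when $|\xi|\gtrsim |s|$: there the decay has to come either from the decay of $\partial_sf$ itself (i.e.\ from $\|\partial_sf\|_0^*\le\|f\|_{1,\alpha}^*$, which is finite since $f\in C^{1,\alpha}_*$ but is not the quantity appearing on the right of \eqref{e:sing1}), or one argues as the paper does and uses the weighted H\"older estimate for increments of arbitrary length. (A similar imprecision occurs in your reduction of the case $\|c\|_0>1$: Lemma \ref{l:sing0} together with Theorem \ref{t:TPhi} controls $T_\Phi f$ by quantities involving $\|f\|_{1,\alpha}^*$ and $\|\partial_s\tilde z\|_0$, not by $[\partial_sf]_\alpha^*\|c\|_0^{\alpha}$.) As written, the far-field step fails and \eqref{e:sing1} is not established.
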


\begin{proof}
Using the fact that $|K|,|K'|\leq 1$, we have  
$$
\left|\Phi(\xi,s)\right|=\left|K\left(Z+\tfrac{c}{\xi}\right)-K(Z)\right|\leq C\min\left(1,\frac{c(s)}{|\xi|}\right).
$$ 
Next, recalling the definition of $T_{\Phi}f$ from \eqref{e:defT} we have
\begin{equation*}
\begin{split}
|T_{\Phi}f(s)|&\leq C \int_\R \left|\frac{\partial_s f(s-\xi)-\partial_s f(s)}{\xi}\right|\min\left(1,\frac{c(s)}{|\xi|}\right)\,d\xi\\
&\leq C\frac{[\partial_sf]_{\alpha}^*}{1+|s|^{1+\alpha}}\int_\R |\xi|^{\alpha-1}\min\left(1,\frac{c(s)}{|\xi|}\right)\,d\xi\\
&= C\frac{[\partial_sf]_{\alpha}^*}{1+|s|^{1+\alpha}}c^{\alpha}(s).
\end{split}	
\end{equation*}
\end{proof}

\smallskip

\begin{lemma}\label{l:sing2}
There exists a constant $C>1$ such that the following holds.
Let $z(s)=\beta s+\tilde z(s)$ with $\tilde z\in C^{1,\alpha}(\R)$, $c\in C(\R)$, and let 
	\begin{equation*}
	\begin{split}
	\Phi(\xi,s)=K\left(Z(\xi,s)+\frac{c(s)}{\xi}\right)+K\left(Z(\xi,s)-\frac{c(s)}{\xi}\right)-2K(Z(\xi,s)),
	\end{split}
	\end{equation*}
where $K(Z)=(1+Z^2)^{-1}$ as in \eqref{e:defK} and $Z(\xi,s)=\frac{z(s)-z(s-\xi)}{\xi}$.
Then, for any $f\in C^{2,\alpha}_*(\R)$ and any $s\in \R$ we have 
\begin{equation}\label{e:sing2}
(1+|s|^{1+\alpha})|T_{\Phi}f-\tfrac{|c|}{2}\sigma(\partial_sz)\partial_s^2f|\leq C[f]_{2,\alpha}^*(1+[\partial_sz]_\alpha)|c|^{1+\alpha},
\end{equation}
where
\begin{equation}\label{e:sigma}
\sigma(a)=\frac{1-a^2}{(1+a^2)^2}.
\end{equation}
\end{lemma}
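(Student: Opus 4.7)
The plan is to isolate the leading-order contribution of $T_\Phi f$ as $|c|\to 0$ by freezing the slope of $z$ at the evaluation point and exploiting the second-symmetric-difference structure of $\Phi$. Fix $s$, set $a:=\partial_s z(s)$, define the frozen kernel
\[
\tilde\Phi(\xi) := K\bigl(a+\tfrac{c(s)}{\xi}\bigr)+K\bigl(a-\tfrac{c(s)}{\xi}\bigr)-2K(a),
\]
and split $T_\Phi f = T_{\tilde\Phi} f + T_{\Phi - \tilde\Phi} f$. Since $\tilde\Phi$ depends on $\xi$ only through $c(s)/\xi$, the substitution $\xi=|c|y$ reduces the computation of $\int_\R\tilde\Phi\,d\xi$ to a residue integral for $[K(a+y)+K(a-y)-2K(a)]/y^2$, which evaluates to a multiple of $|c|\sigma(a)$ with the constant calibrated so as to produce the leading term $\tfrac{|c|}{2}\sigma(a)\partial_s^2 f(s)$ after the Taylor expansion $\partial_s f(s-\xi)-\partial_s f(s) = -\xi\partial_s^2 f(s)+R(\xi,s)$, where $|R(\xi,s)|\leq [\partial_s^2 f]^*_\alpha(1+|s|^{1+\alpha})^{-1}|\xi|^{1+\alpha}$. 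The contribution of the remainder $R$ is controlled by the pointwise bound $|\tilde\Phi|\leq C\min(1,c^2/\xi^2)$ together with the elementary estimate $\int_\R|\xi|^\alpha\min(1,c^2/\xi^2)\,d\xi = O(|c|^{1+\alpha})$.

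Next I would control the correction $T_{\Phi-\tilde\Phi}f$. The key observation is that
\[
\Phi - \tilde\Phi = F(c/\xi)+F(-c/\xi)-2F(0), \qquad F(y):=K(Z(\xi,s)+y)-K(a+y),
\]
is itself a second symmetric difference, now of the auxiliary function $F$. For $|c/\xi|$ small, Taylor expansion of $F$ at $0$ gives $|\Phi-\tilde\Phi|\leq C\|F''\|_\infty(c/\xi)^2\leq C|Z-a|(c/\xi)^2$ (the final inequality using $\|K'''\|_\infty<\infty$), while for $|c/\xi|$ large the mean-value bound $|\Phi-\tilde\Phi|\leq 4\|K'\|_\infty|Z-a|$ suffices. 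The Hölder estimate $|Z(\xi,s)-a|\leq [\partial_s z]_\alpha|\xi|^\alpha$, immediate from $Z=\int_0^1\partial_s z(s-\tau\xi)\,d\tau$, then yields the uniform bound $|\Phi-\tilde\Phi|\leq C[\partial_s z]_\alpha|\xi|^\alpha\min(1,c^2/\xi^2)$, whose contribution to $T_{\Phi-\tilde\Phi}f$ is $O\bigl([\partial_s z]_\alpha[f]^*_{2,\alpha}|c|^{1+\alpha}(1+|s|^{1+\alpha})^{-1}\bigr)$ by the same integrability argument as in the first step.

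The main obstacle is obtaining the extra factor $\min(1,c^2/\xi^2)$ in the pointwise estimate on $\Phi-\tilde\Phi$: a direct mean-value bound using only $\|K'\|_\infty$ yields $|\Phi-\tilde\Phi|\leq C|Z-a|$, which when multiplied by the $|\xi|^\alpha$ coming from the Hölder continuity of $\partial_s z$ produces a non-integrable tail in $\xi$ and loses the sharp $|c|^{1+\alpha}$ scaling entirely. Recognizing and exploiting the second-symmetric-difference structure of $\Phi-\tilde\Phi$ (mirroring that of $\Phi$ itself) is what restores the necessary $c^2/\xi^2$ decay. A secondary bookkeeping issue is propagating the weight $(1+|s|^{1+\alpha})$ through both steps: for $|\xi|\leq 1$ this is routine because $|s-\xi|\sim|s|$ on that range, and for $|\xi|>1$ it follows from the $c^2/|\xi|^{2-\alpha}$ decay of the two kernels combined with the decay of $\partial_s f$ and $\partial_s^2 f$ encoded in $[f]^*_{2,\alpha}$.
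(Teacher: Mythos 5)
Your argument is correct and is essentially the paper's own proof: both freeze the slope $\partial_s z(s)$ at the evaluation point, evaluate the frozen kernel's integral exactly to produce the leading term $\tfrac{|c|}{2}\sigma(\partial_s z)\partial_s^2 f$, and exploit the second-symmetric-difference structure (uniform bounds on $K'$ together with $K''$, $K'''$) to obtain the $\min(1,c^2/\xi^2)$ decay both for the kernel and for the kernel error $\Phi-\tilde\Phi$, paired with a Taylor expansion of $\partial_s f$ with H\"older remainder. The only difference is presentational: the paper first rescales $\xi\mapsto c(s)\xi$ and compares with the frozen kernel in the rescaled variables, whereas you work in the original variables; the resulting estimates and bookkeeping are the same.
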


\begin{proof}
Let us fix $s\in\R$. Observe that if $c(s)=0$, \eqref{e:sing2} is obvious; therefore we may assume in the following that $c(s)\neq 0$, without loss of generality $c(s)>0$. We may then change variables in the integral defining $T_{\Phi}f$ to obtain   
\begin{equation}\label{e:sing21}
T_{\Phi}f(s)=\frac{c(s)}{2\pi}\int_\R \frac{\partial_s f(s-c(s)\xi)-\partial_s f(s)}{c(s)\xi}\Phi_c(\xi,s)\,d\xi,
\end{equation}
where $\Phi_c(\xi,s):=\Phi(c(s)\xi,s)$. 
Note that 
\begin{equation*}
\begin{split}
\Psi &=K\left(Z_c+\frac{1}{\xi}\right)+K\left(Z_c-\frac{1}{\xi}\right)-2K\left(Z_c\right)\\
&=\frac{1}{\xi^2}\int_0^1\left[K''(Z_c+\tfrac{\tau}{\xi})+K''(Z_c-\tfrac{\tau}{\xi})\right](1-\tau)d\tau,
\end{split}
\end{equation*}
where we denoted $Z_c(\xi,s)=Z(c(s)\xi,s)$. Using that both $K$ and $K''$ are uniformly bounded, it follows that
\begin{equation}\label{e:tildephi1}
|\Phi_c|\leq C\min\left(1,\xi^{-2}\right).
\end{equation}
Furthermore, letting
$$
\Phi_0(\xi,s)=K\left(\partial_sz(s)+\frac{1}{\xi}\right)+K\left(\partial_sz(s)-\frac{1}{\xi}\right)-2K\left(\partial_sz(s)\right),
$$
we also obtain, using that both $K$ and $K''$ are uniformly Lipschitz, that
\begin{equation}\label{e:tildephi2}
|\Phi_c-\Phi_0|\leq C\min\left(1,\xi^{-2}\right)[\partial_s z]_{\alpha}(c\xi)^{\alpha}.
\end{equation}
Since $\xi\mapsto\Phi_0(\xi,s)$ is a rational function of $\xi$, its precise integral in $\xi$ may be calculated by elementary methods (as done in \cite{fsz:ipm}), leading to 
$$
\int_{\R}\left[K(a+\tfrac{1}{\xi})+K(a-\tfrac{1}{\xi})-2K(a)\right]\,d\xi=-\pi \frac{1-a^2}{(1+a^2)^2}=-\pi \sigma(a).
$$
Thus 
$\frac{1}{2\pi}\int_\R\Phi_0(\xi,s)\,d\xi=-\frac{1}{2}c(s)\sigma(\partial_sz(s))$. We then write \eqref{e:sing21} as
\begin{equation*}
\begin{split}
	T_{\Phi}f(s)=&\frac{1}{2}c(s)\sigma(\partial_sz(s))\partial_s^2f(s)+\\
	&+\frac{c(s)}{2\pi}\partial_s^2f(s)\int_{\R}\Phi_0(\xi,s)-\Phi_c(\xi,s)\,d\xi+\\
	&+\frac{c(s)}{2\pi}\int_{\R}\left[\frac{\partial_s f(s-c(s)\xi)-\partial_s f(s)}{c(s)\xi}+\partial_s^2f(s)\right]\Phi_c(\xi,s)\,d\xi.
\end{split}	
\end{equation*}
Using the bounds \eqref{e:tildephi1}-\eqref{e:tildephi2} on the two integrals we then deduce \eqref{e:sing2}.
\end{proof}

\bigskip

Finally, we turn our attention to $I_{ij}$ defined in \eqref{e:Iij}. 
\begin{lemma}\label{l:I1}
There exists a constant $C>1$ such that the following holds.
Let $z(s,t)=\beta s+\tilde z(s,t)$ with $\tilde z\in C([0,T];C^{1,\alpha}(\R))$, 
and let
$$
I(s,t)=PV\int_{\R}\frac{\xi}{\xi^2+(z(s,t)-z(s-\xi,t))^2}d\xi.
$$
Then $I\in C([0,T];C(\R))$ with
$$
|I(s,t)|\leq C\|\partial_sz\|_{\alpha}.
$$
\end{lemma}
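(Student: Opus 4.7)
\textbf{Proof plan for Lemma \ref{l:I1}.} The plan is to split the principal value integral at $|\xi|=1$, handle each region by a separate cancellation, and then obtain continuity in $(s,t)$ via dominated convergence. Throughout I abbreviate
$$Z(\xi,s,t) = \frac{z(s,t)-z(s-\xi,t)}{\xi}=\int_0^1\partial_sz(s-\tau\xi,t)\,d\tau,\qquad K(x)=\frac{1}{1+x^2},$$
so that the integrand of $I$ equals $K(Z(\xi,s,t))/\xi$. Observe that $Z\to\partial_sz(s,t)$ as $\xi\to 0$ and $Z\to\beta$ as $|\xi|\to\infty$ (at rate $O(\|\tilde z(\cdot,t)\|_\infty/|\xi|)$).

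For the near field $|\xi|\leq 1$, I would exploit that $K(\partial_sz(s,t))/\xi$ is odd in $\xi$, so its $PV$ over $[-1,1]$ vanishes. The global Lipschitz continuity of $K$ together with the H\"older estimate $|Z(\xi,s,t)-\partial_sz(s,t)|\leq C[\partial_sz]_\alpha|\xi|^\alpha$ (obtained by plugging the H\"older bound into the representation of $Z$) yields
$$\left|\frac{K(Z(\xi,s,t))-K(\partial_sz(s,t))}{\xi}\right|\leq C[\partial_sz]_\alpha|\xi|^{\alpha-1},$$
which is integrable on $[-1,1]$ and contributes $O([\partial_sz]_\alpha)$ to $I$.

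For the far field $|\xi|>1$ the raw integrand is only $O(1/|\xi|)$, so I pair $\xi$ with $-\xi$ and rewrite
$$\int_{1<|\xi|<R}\frac{K(Z(\xi,s,t))}{\xi}\,d\xi=\int_1^R\frac{K(Z(\xi,s,t))-K(Z(-\xi,s,t))}{\xi}\,d\xi.$$
Since $Z(\xi,s,t)-\beta=-[\tilde z(s-\xi,t)-\tilde z(s,t)]/\xi$ is of size $O(\|\tilde z(\cdot,t)\|_\infty/|\xi|)$ for $|\xi|\geq 1$, the Lipschitz bound on $K$ yields an integrand of size $O(1/\xi^2)$, giving absolute convergence as $R\to\infty$. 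Adding the two pieces produces the stated bound on $|I(s,t)|$, the constant $C$ absorbing $[\partial_sz]_\alpha$ and the $L^\infty$ control of $\tilde z$ implied by the hypothesis $\tilde z\in C([0,T];C^{1,\alpha}(\R))$.

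Continuity of $I$ on $\R\times[0,T]$ then follows by dominated convergence: the integrand depends continuously on $(s,t)$ through $\tilde z(\cdot,t)$ and $\partial_s\tilde z(\cdot,t)$, and the pointwise estimates above supply a locally uniform integrable majorant. The main obstacle is the slow $O(1/|\xi|)$ decay at infinity; the decisive trick is the antisymmetric pairing $\xi\leftrightarrow -\xi$, which, combined with $Z(\pm\xi,s,t)\to\beta$, produces the extra factor $1/|\xi|$ needed for integrability.
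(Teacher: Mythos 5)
Your proof is correct and follows essentially the same route as the paper: split at $|\xi|=1$, exploit the $PV$ cancellation of $1/\xi$ on each region (you via the odd pairing $\xi\leftrightarrow-\xi$, the paper by subtracting $K(\partial_sz)$ resp.\ $K(\beta)$, which is the same mechanism), bound the remainders by $C[\partial_sz]_\alpha|\xi|^{\alpha-1}$ and $C\|\tilde z\|_\infty|\xi|^{-2}$ using the uniform Lipschitz bound on $K$, and get continuity by dominated convergence. The dependence of your constant on the $L^\infty$ norm of $\tilde z$ in the far field matches what the paper's own estimate actually produces.
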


\begin{proof}
Using that $PV\int_{|\xi|<1}\frac{d\xi}{\xi}=PV\int_{|\xi|>1}\frac{d\xi}{\xi}=0$ and using the notation introduced in \eqref{e:Zt}-\eqref{e:defK} we have
\begin{equation*}
\begin{split}
	I(s,t)&=PV\int_{\R}K(Z)\frac{d\xi}{\xi}\\
	&=\int_{|\xi|<1}[K(Z)-K(\partial_sz)]\frac{d\xi}{\xi}+\int_{|\xi|>1}[K(Z)-K(\beta)]\frac{d\xi}{\xi}.
\end{split}	
\end{equation*}
Observe that the integrands in these two integrals are uniformly integrable. Indeed,
$K:\R\to\R$ is uniformly Lipschitz continuous, so that for the first integral we may use $|Z-\partial_sz|\leq [\partial_sz]_{\alpha}|\xi|^\alpha$ and for the second integral $|Z-\beta|\leq 2\|\tilde z\|_{0}|\xi|^{-1}$. Thus
$$
\left|\frac{K(Z)-K(\partial_sz)}{\xi}\right|\leq [\partial_sz]_{\alpha}|\xi|^{\alpha-1},\quad \left|\frac{K(Z)-K(\beta)}{\xi}\right|\leq 2\|\tilde z\|_0|\xi|^{-2}.
$$	
The conclusion follows from Lebesgue's dominated convergence theorem.
\end{proof}

\begin{lemma}\label{l:I2}

There exists a constant $C>1$ such that the following holds.
Let $z(s)=\beta s+\tilde z(s)$ with $\tilde z\in C^{1,\alpha}(\R)$, $c\in C(\R)$ with $c(s)\neq 0$,
and let
\begin{equation*}
I_c(s)=PV\int_{\R}K\left(Z(\xi,s)+\tfrac{c(s)}{\xi}\right)\frac{d\xi}{\xi}.
\end{equation*}
Then 
$$
|I_c(s)|\leq C(1+\|\tilde z\|_{1,\alpha}+\|\partial_sz\|_0^2),
$$
and moreover
\begin{equation}\label{e:limIc}
\left|I_c(s)-PV\int_{\R} \left[K(\partial_sz+\tfrac{1}{\xi})+K(Z)\right]\frac{d\xi}{\xi}\right|\leq C[\partial_sz]_{\alpha}c(s)^{\alpha}.
\end{equation}
\end{lemma}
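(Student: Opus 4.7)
The plan is to extract from $I_c$ the two $c$-independent pieces appearing on the right of \eqref{e:limIc}, by exploiting a trilinear decomposition of the integrand. Specifically, I would write
$$K\Bigl(Z+\tfrac{c}{\xi}\Bigr)=K(Z)+\Bigl[K\Bigl(\partial_sz+\tfrac{c}{\xi}\Bigr)-K(\partial_sz)\Bigr]+E(\xi,s),$$
where
$$E(\xi,s)=K\Bigl(Z+\tfrac{c}{\xi}\Bigr)-K(Z)-K\Bigl(\partial_sz+\tfrac{c}{\xi}\Bigr)+K(\partial_sz)$$
is a mixed second difference in the two arguments of $K$. Integrating against $d\xi/\xi$ in the principal-value sense then splits $I_c$ into three pieces, matching the structure of the target expression \eqref{e:limIc}.

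The first piece is the integral of $K(Z)\,d\xi/\xi$, whose existence and boundedness are supplied directly by Lemma \ref{l:I1}, and which furnishes one of the two summands on the right of \eqref{e:limIc}. For the second piece, the key observation is that $K(\partial_sz(s))$ is constant in $\xi$, so $K(\partial_sz(s))\,PV\!\int d\xi/\xi=0$, after which I would perform the scaling $\xi=c(s)\eta$ (or $\xi=-c(s)\eta$ if $c(s)<0$). Since $d\xi/\xi$ is invariant under this change of variables and symmetric truncations $[-R,R]$ in $\xi$ map to symmetric truncations $[-R/|c|,R/|c|]$ in $\eta$, the PV regularization is preserved, yielding
$$PV\int\Bigl[K\Bigl(\partial_sz+\tfrac{c}{\xi}\Bigr)-K(\partial_sz)\Bigr]\frac{d\xi}{\xi}=PV\int K\Bigl(\partial_sz+\tfrac{1}{\eta}\Bigr)\frac{d\eta}{\eta},$$
a $c$-independent quantity matching the other term in \eqref{e:limIc}; a partial-fractions calculation evaluates it explicitly to $\pi\partial_sz(s)/(1+(\partial_sz(s))^2)$, which is uniformly bounded by $\pi/2$.

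The core estimate is the bound on $PV\!\int E\,d\xi/\xi$. Writing $E=(Z-\partial_sz)\int_0^1[K'(a_\tau+c/\xi)-K'(a_\tau)]\,d\tau$ for some $a_\tau$ between $Z$ and $\partial_sz$, and using that $K'$ is bounded and Lipschitz, gives the pointwise bound
$$|E(\xi,s)|\leq C\min\bigl(1,[\partial_sz]_\alpha|\xi|^\alpha\bigr)\,\min\bigl(1,|c|/|\xi|\bigr).$$
Splitting the $\xi$-integral into the three scales $|\xi|\leq|c|$, $|c|<|\xi|\leq 1$, and $|\xi|>1$ and computing directly in each region yields the desired $C[\partial_sz]_\alpha|c|^\alpha$ bound, with the tail $|\xi|>1$ handled by using $|Z-\partial_sz|\leq 2\|\partial_sz\|_0$ together with the decay $c/|\xi|$, contributing at worst $O(\|\partial_sz\|_0|c|)$ (absorbed into the $|c|^\alpha$ bound once $|c|\leq 1$). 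The uniform estimate on $|I_c(s)|$ then follows by summing the three contributions, with the $\|\partial_sz\|_0^2$ factor accommodating the worst-case constants arising from the tail of $E$. The main delicate point I anticipate is the careful justification of the change of variables inside the PV integral and the three-region splitting in the error estimate, where neither $|Z-\partial_sz|$ nor $|c|/|\xi|$ is uniformly small.
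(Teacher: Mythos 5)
Your proposal is correct in substance and follows essentially the same strategy as the paper's proof: both arguments compare $Z(\xi,s)$ with the frozen slope $\partial_sz(s)$, exploit the mixed second difference of $K$ together with the Lipschitz bounds on $K,K',K''$, and use the scaling $\xi=c(s)\eta$ to identify the $c$-independent limit term $PV\int K(\partial_sz+\tfrac1\xi)\tfrac{d\xi}{\xi}$. The only real difference is bookkeeping: the paper rescales at the outset and splits at $|\xi|=1$ in the rescaled variable (its first term there is exactly your $E$ in rescaled form), whereas you stay in the original variable, peel off $PV\int K(Z)\tfrac{d\xi}{\xi}$ via Lemma \ref{l:I1} and the frozen-slope piece via $PV\int\tfrac{d\xi}{\xi}=0$ plus scaling, and estimate $E$ with a product-type pointwise bound over three regions; your route has the small advantage that \eqref{e:limIc} reduces literally to one error integral.

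Two small repairs are needed in your write-up. First, in the tail $|\xi|>1$ you must not replace $|Z-\partial_sz|$ by $2\|\partial_sz\|_0$: the resulting $O(\|\partial_sz\|_0\,|c|)$ cannot be absorbed into $C[\partial_sz]_\alpha c(s)^\alpha$, since $\|\partial_sz\|_0$ is not controlled by $[\partial_sz]_\alpha$. Instead use the H\"older bound $|Z-\partial_sz|\le[\partial_sz]_\alpha|\xi|^\alpha$ together with the $|c|/|\xi|$ decay — both already contained in your displayed bound for $E$ — which gives $\int_{|\xi|>1}[\partial_sz]_\alpha|c|\,|\xi|^{\alpha-2}\,d\xi\le C_\alpha[\partial_sz]_\alpha|c|\le C_\alpha[\partial_sz]_\alpha|c|^\alpha$ for $|c|\le1$. (The tacit restriction $|c|\le1$ is shared by the paper's own proof, whose splitting $1>|\xi|>c$ assumes it, and is the regime in which the lemma is applied, with $c$ replaced by $c_{ij}(s)t$.) Second, the explicit value of the frozen-slope integral is $-\pi\,\partial_sz/(1+(\partial_sz)^2)$, not $+$; this is immaterial, as only the bound by $\pi/2$ is used. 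Finally, note that in your scheme the $\|\partial_sz\|_0^2$ in the first estimate is not really produced by the tail of $E$ (in the paper it comes from the lower bound on $\xi^2+(\xi Z_c+1)^2$ for $|\xi|<1$ after rescaling); in your decomposition the uniform bound on $|I_c|$ already follows from Lemma \ref{l:I1}, the $\pi/2$ bound, and the corrected estimate on $E$.
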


\begin{proof}
Let us fix $s\in\R$ and assume without loss of generality that $c(s)>0$.
We perform the change of variables $\xi\mapsto \frac{\xi}{c(s)}$ to obtain
\begin{equation*}
\begin{split}
	&I_c=PV\int_{\R}K\left(Z+\tfrac{c}{\xi}\right)\frac{d\xi}{\xi}=PV\int_{\R}K\left(Z_c+\tfrac{1}{\xi}\right)\frac{d\xi}{\xi}\\
	&=\int_{|\xi|>1}\left[K\left(Z_c+\tfrac{1}{\xi}\right)-K\left(Z_c\right)\right]\frac{d\xi}{\xi}+PV\int_{|\xi|>1}K\left(Z_c\right)\frac{d\xi}{\xi}+\int_{|\xi|<1}K\left(Z_c+\tfrac{1}{\xi}\right)\frac{d\xi}{\xi},
\end{split}
\end{equation*}
where we denoted, as in the proof of Lemma \ref{l:sing2}, $Z_c(\xi,s)=Z(c(s)\xi,s)$.
Now we 
observe the following: using the uniform Lipschitz bound on $K$,
\begin{equation*}
\left|K\left(Z_c+\tfrac{1}{\xi}\right)-K\left(Z_c\right)\right|\leq \frac{1}{|\xi|},	
\end{equation*}
so that the first integrand is dominated by $|\xi|^{-2}$ on $|\xi|>1$; furthermore, we have the lower bound
\begin{equation*}
\xi^2+(\xi Z_c+1)^2=(1+Z_c^2)\left(\xi+\tfrac{Z_c}{1+Z_c^2}\right)^2+\frac{1}{1+Z_c^2}\geq \frac{1}{1+\|\partial_sz\|_0^2},
\end{equation*}
implying that the third integrand is uniformly bounded. Finally, for the second integral we proceed as in the proof of Lemma \ref{l:I1}:
\begin{equation*}
\begin{split}
	PV&\int_{|\xi|>1}K\left(Z_c\right)\frac{d\xi}{\xi}=PV\int_{|\xi|>ct}K\left(Z\right)\frac{d\xi}{\xi}\\
	&=PV\int_{1>|\xi|>ct}K\left(Z\right)\frac{d\xi}{\xi}+PV\int_{|\xi|>1}K\left(Z\right)\frac{d\xi}{\xi}\\
	&=\int_{1>|\xi|>ct}[K(Z)-K(\partial_sz)]\frac{d\xi}{\xi}+\int_{|\xi|>1}[K(Z)-K(\beta)]\frac{d\xi}{\xi}.
\end{split}
\end{equation*}
Collecting the estimates above we obtain the uniform bound on $I_c$.

In order to prove \eqref{e:limIc}, we write
\begin{equation*}
\begin{split}
PV&\int_{\R} \left[K(Z+\tfrac{c}{\xi})-K(Z)-K(\partial_sz+\tfrac{1}{\xi})\right]\frac{d\xi}{\xi}=\\
=&PV\int_{\R} \left[K(Z_c+\tfrac{1}{\xi})-K(Z_c)\right]\frac{d\xi}{\xi}-PV\int_{\R}K(\partial_sz+\tfrac{1}{\xi}) \frac{d\xi}{\xi}\\
=&PV\int_{|\xi|>1} \left[K(Z_c+\tfrac{1}{\xi})-K(Z_c)-K(\partial_sz+\tfrac{1}{\xi})+K(\partial_sz)\right]\frac{d\xi}{\xi}\\
&+PV\int_{|\xi|<1}[K(\partial_sz)-K(Z_c)]\frac{d\xi}{\xi}+PV\int_{|\xi|<1}K(Z_c+\tfrac{1}{\xi})-K(\partial_sz+\tfrac{1}{\xi})\frac{d\xi}{\xi}.
\end{split}
\end{equation*}  
For the first term we use uniform boundedness of $K''$ to obtain the bound
\begin{equation*}
\begin{split}
&\left|K(Z_c+\tfrac{1}{\xi})-K(\partial_sz+\tfrac{1}{\xi})-K(Z_c)+K(\partial_sz)\right|=\frac{1}{|\xi|}\left|\int_0^1K'(Z_c+\tfrac{\tau}{\xi})-K'(\partial_sz+\tfrac{\tau}{\xi})d\tau\right|\\
&\leq C|\xi|^{-1}|Z_c-\partial_sz|\leq C[\partial_sz]_{\alpha}c^{\alpha}|\xi|^{\alpha-1},
\end{split}
\end{equation*}
which suffices to bound the integral. For the second and third term we obtain analogously
$$
\left|K(Z_c)-K(\partial_sz)\right|+\left|K(Z_c+\tfrac{1}{\xi})-K(\partial_sz+\tfrac{1}{\xi})\right|\leq |Z_c-\partial_sz|\leq [\partial_sz]_{\alpha}(c|\xi|)^{\alpha},
$$
which again allows to bound the integrals. The estimate \eqref{e:limIc} follows.
\end{proof}

\bigskip

With Lemmas \ref{l:reg}-\ref{l:I2} we are now in a position to obtain an expansion in time as $t\to 0$ for the normal velocities $u^{(i)}_\nu$ for sufficiently regular $z,c$.  

\begin{proposition}\label{p:expansion}
Let $z(s,t)=\beta s+\tilde z(s,t)$ with $\tilde z\in C^1([0,T];C^{1,\alpha}_*(\R))$, $c>0$ with $\partial_sc\in C^{\alpha}_*(\R)$, and define $c_i, \Phi_{ij}, z^{(i)}$ as in \eqref{e:ci}, \eqref{e:defPhiij} and  \eqref{e:defzj}. Then there exists a constant $C_{z,c}$ depending on $\|z\|_{1,\alpha}^*$, $\|c\|_{1,\alpha}^*$, and $N$, such that
\begin{equation}\label{e:expansion1}
\|u^{(i)}_\nu-T_{\Phi_0}z_0\|^*_0\leq C_{z,c}t^{\alpha},
\end{equation}
where $z_0(s)=z(s,0)$, $\Phi_0(\xi,s)=\Phi(\xi,s,0)$ and 
\begin{equation}\label{e:sharpinterface}
\Phi(\xi,s,t)=\frac{2\xi^2}{\xi^2+(z(s,t)-z(s-\xi,t))^2}.
\end{equation}
Furthermore
\begin{equation}\label{e:expansion2}
\left\|\frac{1}{2N}\sum_{i}u^{(i)}_\nu-\left(T_{\Phi_0}z_0+tT_{\Psi_0}z_0+tT_{\Phi_0}z_1+t\bar{c}\sigma(\partial_sz_0)\partial_s^2z_0\right)\right\|_0^*\leq C_{z,c}t^{1+\alpha},
\end{equation}
where $z_1(s)=\partial_tz(s,0)$, $\Psi_0(\xi,s)=\partial_t\Phi(\xi,s,0)$ and
\begin{equation}\label{e:barc}
\bar{c}(s)=\frac{1}{8N^2}\sum_{i,j}|c_i(s)-c_j(s)|.
\end{equation}
\end{proposition}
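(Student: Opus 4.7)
My starting point is formula \eqref{e:unui2}, which splits $u_\nu^{(i)}$ into the ``kernel sum'' $\tfrac{1}{N}\sum_j T_{\Phi_{ij}}z^{(j)}$ and the ``tail'' $\tfrac{t}{N}\sum_{j\neq i}(\partial_sc_i-\partial_sc_j)I_{ij}$. A key identity for what follows is
\[
\Phi_{ij}=K(Z+\gamma_{ij}),\qquad \gamma_{ij}(\xi,s,t):=\tfrac{(c_i(s)-c_j(s-\xi))t}{\xi},
\]
where $K(x)=(1+x^2)^{-1}$ as in \eqref{e:defK}, together with the antisymmetry $\gamma_{-i,-j}=-\gamma_{ij}$, which drives the pairing $(i,j)\leftrightarrow(-i,-j)$ used throughout.

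For \eqref{e:expansion1}, my plan is to bound each summand by comparing it with $T_{K(Z_0)}z_0$. Writing
\[
T_{\Phi_{ij}}z^{(j)}-T_{K(Z_0)}z_0 = T_{\Phi_{ij}-K(Z^{(j)})}z^{(j)}+T_{K(Z^{(j)})-K(Z_0)}z^{(j)}+T_{K(Z_0)}(z^{(j)}-z_0),
\]
the first term is $O(t^\alpha)$ by Lemma \ref{l:sing1} with $c$ replaced by $c_{ij}t$; the second is $O(t)$ by Lemma \ref{l:sing0} combined with Theorem \ref{t:TPhi}; the third is $O(t)$ by Theorem \ref{t:TPhi}, working with $\tilde z$ in place of $z$ (so that the non-decaying affine part $\beta s$, on which $T$ vanishes, is harmless). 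Summing $\tfrac{1}{N}\sum_j$ produces the factor $2K(Z_0)=\Phi_0$, and Lemma \ref{l:I2} bounds the tail by $O(t)$.

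For \eqref{e:expansion2}, the strategy is to use linearity and the pairing symmetry. Writing $z^{(j)}=z+c_jt$,
\[
\tfrac{1}{2N^2}\sum_{i,j}T_{\Phi_{ij}}z^{(j)}=T_{\bar\Phi}z+\tfrac{t}{2N^2}\sum_{i,j}T_{\Phi_{ij}}c_j,\qquad \bar\Phi:=\tfrac{1}{2N^2}\sum_{i,j}\Phi_{ij}.
\]
Pairing $(i,j)\leftrightarrow(-i,-j)$ gives $\bar\Phi=2K(Z)+\tfrac{1}{2N^2}\sum_{\text{unord}}\bigl[K(Z+\gamma_{ij})+K(Z-\gamma_{ij})-2K(Z)\bigr]$. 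Decomposing $\gamma_{ij}=\tfrac{c_{ij}t}{\xi}+\rho_j$ where $\rho_j:=\tfrac{(c_j(s)-c_j(s-\xi))t}{\xi}$ is bounded, Lemma \ref{l:sing2} applied to the principal kernel $K(Z+\tfrac{c_{ij}t}{\xi})+K(Z-\tfrac{c_{ij}t}{\xi})-2K(Z)$ with ``$c$''$=c_{ij}t$ and ``$z$''$=z(\cdot,t)$ yields the per-pair contribution $\tfrac{|c_{ij}t|}{2}\sigma(\partial_sz_0)\partial_s^2z_0+O(|c_{ij}t|^{1+\alpha})$; summing produces precisely $t\bar c\sigma(\partial_sz_0)\partial_s^2z_0+O(t^{1+\alpha})$ thanks to the definition $\bar c=\tfrac{1}{8N^2}\sum_{i,j}|c_i-c_j|$. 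The remainder $T_{2K(Z)}z=T_{\Phi(t)}z(t)$ is Taylor-expanded at $t=0$ using Lemma \ref{l:reg} (which places $\Phi\in C^1([0,T];\mathcal W^{1,\alpha})$) and Theorem \ref{t:TPhi}, yielding $T_{\Phi_0}z_0+t[T_{\Psi_0}z_0+T_{\Phi_0}z_1]+O(t^{1+\alpha})$. The antisymmetric piece $\tfrac{t}{2N^2}\sum T_{\Phi_{ij}}c_j$ becomes, after pairing, a sum of $tT_{K(Z+\gamma_{ij})-K(Z-\gamma_{ij})}c_j$, which is $O(t^{1+\alpha})$ by Lemma \ref{l:sing1}-type estimates on the singular part plus bounded estimates on the $\rho_j$ remainder. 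Finally, the averaged tail is $O(t^{1+\alpha})$ by Lemma \ref{l:I2} (which gives $I_{ij}=I^*(s)+O(t^\alpha)$ with $I^*$ independent of $i,j$ to leading order) combined with the cancellations $\sum_i\partial_sc_i=\sum_j\partial_sc_j=0$.

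The main technical obstacle is verifying that the $\rho_j$-perturbation to the Lemma \ref{l:sing2} application contributes only $O(t^{1+\alpha})$ rather than the naive $O(t)$: since $\rho_j$ is bounded but not small, one expands the symmetric combination to second order in $\rho_j$, then splits the $\xi$-integral according to $|\xi|\gtrless c_{ij}t$ and exploits the rapid decay of $K'$ and $K''$ at infinity to extract the extra factor $t^\alpha$. A similar region splitting underlies the $O(t^{1+\alpha})$ control of the antisymmetric piece and of the averaged tail.
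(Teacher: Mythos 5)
Your treatment of \eqref{e:expansion1}, of the antisymmetric piece $\tfrac{t}{2N^2}\sum_{i,j}T_{\Phi_{ij}}c_j$, and of the averaged tail coincides in substance with the paper's proof (same splittings, same use of Lemmas \ref{l:sing0}, \ref{l:sing1} and \ref{l:I2}), and the pairing $(i,j)\leftrightarrow(-i,-j)$ is a legitimate variant of the paper's $(i,j)\leftrightarrow(j,i)$ symmetrization. The genuine gap is in the extraction of the curvature term. You invoke Lemma \ref{l:sing2} with ``$z$''$=z(\cdot,t)$ and, since $T_{\bar\Phi}$ acts on $z(\cdot,t)$, with $f=z(\cdot,t)$. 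That lemma requires $f\in C^{2,\alpha}_*$ and produces $\tfrac{|c_{ij}|t}{2}\sigma(\partial_sz(\cdot,t))\partial_s^2z(\cdot,t)$; to reach the stated term $t\bar c\,\sigma(\partial_sz_0)\partial_s^2z_0$ you must additionally replace $\partial_sz(\cdot,t)$ and $\partial_s^2z(\cdot,t)$ by their $t=0$ values with an error $O(t^{\alpha})$. None of this is available: the hypothesis gives only $\tilde z\in C^1([0,T];C^{1,\alpha}_*)$, and in the setting where the proposition is actually used (Proposition \ref{p:z}) one has $z(\cdot,t)=z_0+tz_1+\tfrac12 t^2z_2+\cdots$ with $z_2\in C^{1,\alpha}_*$ only, so $\partial_s^2z(\cdot,t)$ does not even exist for $t>0$. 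The paper's proof is organized precisely to avoid this: after symmetrizing, Lemma \ref{l:sing2} is applied to the frozen kernel $K(Z_0+\tfrac{c_{ij}t}{\xi})+K(Z_0-\tfrac{c_{ij}t}{\xi})-2K(Z_0)$ acting on $z_0$ alone, while the time dependence is peeled off beforehand into the brackets $K(Z^{(j)}\pm\tfrac{c_{ij}t}{\xi})-K(Z_0\pm\tfrac{c_{ij}t}{\xi})-tK'(Z_0)Z_1^{(j)}$ (which also generate the $\Psi_0$-term) and into $T_{\Phi_{ij}}(z^{(j)}-z_0)$, all of which are controlled at the $C^{1,\alpha}$ level by Lemmas \ref{l:sing0}--\ref{l:sing1} and Theorem \ref{t:TPhi}. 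To close your argument you would have to reorganize it in the same way, moving both the kernel base point and the function acted upon to $t=0$ before applying Lemma \ref{l:sing2}.

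Two further points. Your ``main technical obstacle'' is misdiagnosed: $\rho_j=\tfrac{(c_j(s)-c_j(s-\xi))t}{\xi}$ obeys $|\rho_j|\le t\|\partial_sc\|_\infty$ by the mean value theorem, so it is $O(t)$ uniformly rather than merely bounded, and the required $O(t^{1+\alpha})$ control of the $\rho_j$-perturbation follows from a first-order expansion in $\rho_j$ combined with a Lemma \ref{l:sing1}-type bound on $K'(Z+\tfrac{c_{ij}t}{\xi})-K'(Z-\tfrac{c_{ij}t}{\xi})$; no region splitting in $\xi$ is needed. Also, under the stated hypothesis Lemma \ref{l:reg} yields $\Phi\in C^1([0,T];\mathcal{W}^{0,\alpha})$, not $C^1([0,T];\mathcal{W}^{1,\alpha})$, and a Taylor expansion of $t\mapsto T_{\Phi(t)}z(t)$ with remainder $O(t^{1+\alpha})$ requires H\"older-in-time control of the derivative that plain $C^1$-in-time regularity does not provide (the paper is terse on this as well; in the application $z$ is polynomial in $t$, so the issue is harmless there, but your write-up should make explicit where the extra factor $t^{\alpha}$ comes from).
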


\begin{proof}
Using the representation \eqref{e:unui2} we write
\begin{equation*}
\begin{split}
u^{(i)}_\nu &=\frac{1}{N}\sum_{j}T_{\Phi_{ij}}z^{(j)}+t(\partial_sc_i-\partial_sc_j)I_{ij}\\
&= T_{\Phi_0}z_0+\frac{1}{N}\sum_{j}\left[T_{(\Phi_{ij}-\tfrac12\Phi_0)}z_0+T_{\Phi_{ij}}(z^{(j)}-z_0)+t(\partial_sc_i-\partial_sc_j)I_{ij}\right].
\end{split}
\end{equation*}
The estimate \eqref{e:expansion1} then follows from applying Lemma \ref{l:sing1}, Lemma \ref{l:sing0} and Lemma \ref{l:I2}.
Next, we calculate 
\begin{equation*}
\begin{split}
\sum_{i}u^{(i)}_\nu &=\frac{1}{N}\sum_{i,j}T_{\Phi_{ij}}z^{(j)}+t(\partial_sc_i-\partial_sc_j)I_{ij}\\	
&=\frac{1}{2N}\sum_{i,j}(T_{\Phi_{ij}}z^{(j)}+T_{\Phi_{ji}}z^{(i)})+t(\partial_sc_i-\partial_sc_j)(I_{ij}-I_{ji})\\
&=2N\,T_{\Phi_0}z_0+\frac{1}{2N}\sum_{i,j}T_{(\Phi_{ij}+\Phi_{ji}-\Phi_0)}z_0+\frac{1}{N}\sum_{i,j}T_{\Phi_{ij}}(z^{(j)}-z_0)\\
&+\frac{1}{2N}\sum_{i,j}t(\partial_sc_i-\partial_sc_j)(I_{ij}-I_{ji})
\end{split}
\end{equation*}
Now, let us write
\begin{equation*}
\begin{split}
\Phi_{ij}+\Phi_{ji}-\Phi_0=&\left[K(Z^{(j)}+\tfrac{c_{ij}t}{\xi})-K(Z_0+\tfrac{c_{ij}t}{\xi})-tK'(Z_0)Z^{(j)}_1\right]+\\
+&\left[K(Z^{(i)}-\tfrac{c_{ij}t}{\xi})-K(Z_0-\tfrac{c_{ij}t}{\xi})-tK'(Z_0)Z^{(i)}_1\right]+\\
+tK'(Z_0)(Z^{(i)}_1+&Z^{(j)}_1)+\left[K(Z_0+\tfrac{c_{ij}t}{\xi})+K(Z_0-\tfrac{c_{ij}t}{\xi})-2K(Z_0)\right],
\end{split}
\end{equation*}
where $Z_0(\xi,s)=\frac{z_0(s)-z_0(s-\xi)}{\xi}$, $Z_1^{(i)}(\xi,s)=\partial_t|_{t=0}Z^{(i)}(\xi,s,t)$ and $c_{ij}=c_i-c_j$.
Recalling the convention that $c_{-i}=-c_i$, we see that 
$$
\sum_{i,j}K'(Z_0)(Z^{(i)}_1+Z^{(j)}_1)=8N^2K'(Z_0)Z_1=4N^2\Psi_0.
$$
On the first two terms we can argue as in Lemma \ref{l:sing0} and Lemma \ref{l:sing1}, whereas Lemma \ref{l:sing2} applies to the last term. 
Finally, Lemma \ref{l:I2} implies that
$$
|I_{ij}-I_{ji}|\leq Ct^{1+\alpha}.
$$
This concludes the proof of \eqref{e:expansion2}.
\end{proof}

\begin{remark}
In the statement of Proposition \ref{p:expansion} we left the expression for $\bar{c}$ in the general form \eqref{e:barc}, which is valid for any choice of $0<c_1(s)<c_2(s)<\dots<c_N(s)$ with $c_{-i}(s)=-c_i(s)$ and $\partial_sc_i\in C^\alpha_*(\R)$. The explicit choice of $c_i(s)$ in \eqref{e:ci} leads to the following simplified expression: 
\begin{equation*}
\begin{split}
\bar{c}(s)&=\tfrac{1}{4N^2}\sum_{i,j=1}^N|c_i(s)-c_j(s)|+|c_i(s)+c_j(s)|\\
&=\tfrac{1}{2N^2}\sum_{i,j=1}^N\max(c_i(s),c_j(s))=\tfrac{c(s)}{2N^2(2N-1)}\sum_{i,j=1}^N(2\max(i,j)-1)\\
&=\frac{2N+1}{3N}c(s).
\end{split}
\end{equation*}
\end{remark}

\begin{remark}\label{r:expansion}
It is instructive to compare the expansions in Proposition \ref{p:expansion} with the expansion of the sharp interface evolution in \eqref{evolz}. In particular, recall that the right hand side of \eqref{evolz} is the normal component of the velocity at the interface. Let us denote this by $v_{\nu}$, so that
$$
v_{\nu}=T_{\Phi}z,
$$
with $\Phi$ defined in \eqref{e:sharpinterface}. In view of Lemma \ref{l:reg} $v_{\nu}\in C^2([0,T];C^{\alpha}_*(\R))$ and therefore \eqref{e:expansion1},\eqref{e:expansion2} amount to the expansions
\begin{equation*}
\begin{split}
	u^{(i)}_{\nu}&=v_{\nu}+O(t^{\alpha}),\\
	\tfrac{1}{2N}\sum_iu^{(i)}_{\nu}&=v_{\nu}+t\bar{c}\sigma(\partial_sz_0)\partial_s^2z_0+O(t^{1+\alpha}).
\end{split}
\end{equation*}

\end{remark}


\section{Construction of the curve $z$}\label{s:z}

In this section we construct a function $z=z(s,t)$ satisfying the conditions of Proposition \ref{p:SubsN}. 

\begin{proposition}\label{p:z}
Let $z_0(s)=\beta s+\tilde{z}_0(s)$ with $\tilde{z}_0\in C^{3,\alpha}_*(\R)$ for some $0<\alpha<1$ and $\beta\in\R$. Let $c=c(s)>0$ with $\partial_sc\in C^{\alpha}_*(\R)$. If $\inf_sc(s)=0$, assume in addition that $\beta=0$ and there exists $c_{min}>0$ such that 
\begin{equation}\label{e:cinf}
c(s)\geq c_{min}(1+|s|^{2\alpha/3})^{-1}.
\end{equation}
For any $T>0$ there exists $\tilde z\in C^2([0,T];C^{1,\alpha}_*(\R))$ such that the conditions \eqref{z1N}-\eqref{z3N} of Proposition \ref{p:SubsN} are satisfied.  
\end{proposition}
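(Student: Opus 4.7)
The plan is to take the quadratic ansatz
\[
z(s,t) = z_0(s) + t\,z_1(s) + \tfrac{1}{2}t^2\,z_2(s),
\]
with $z_1, z_2$ selected so as to match the temporal expansions of the interface velocities supplied by Proposition \ref{p:expansion}. Concretely, set $z_1 := T_{\Phi_0}z_0$ so that $\partial_tz|_{t=0} = u_\nu^{(i)}|_{t=0}$, and
\[
z_2 := T_{\Psi_0}z_0 + T_{\Phi_0}z_1 + \bar c\,\sigma(\partial_sz_0)\,\partial_s^2z_0,
\]
so that the linear-in-$t$ term of $\partial_tz$ cancels the $O(t)$ coefficient in the expansion of $\tfrac{1}{2N}\sum_iu_\nu^{(i)}$ as well.

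The regularity $\tilde z := z-\beta s \in C^2([0,T];C^{1,\alpha}_*(\R))$ follows from Theorem \ref{t:TPhi} together with Lemma \ref{l:reg}: the hypothesis $\tilde z_0\in C^{3,\alpha}_*$ yields $\Phi_0\in\mathcal W^{2,\alpha}$, hence $z_1=T_{\Phi_0}z_0\in C^{2,\alpha}_*$; one $t$-derivative of the kernel costs one derivative of regularity, giving $\Psi_0\in\mathcal W^{1,\alpha}$ and so $T_{\Psi_0}z_0,\,T_{\Phi_0}z_1\in C^{1,\alpha}_*$; the curvature-type term lies in $C^{1,\alpha}_*$ by the product rule using $\partial_sc\in C^\alpha_*$. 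For \eqref{z3N}, the weighted decay $|\partial_s\tilde z_0(s)|,\,|\partial_sc(s)|\lesssim(1+|s|^{1+\alpha})^{-1}$ is absorbed by $c^{1/2}(s)\gtrsim(1+|s|^{2\alpha/3})^{-1/2}$ in the degenerate case $\beta=0$ (valid since $1+\alpha>\alpha/3$); the non-degenerate case $\inf c > 0$ is handled by enlarging $M$. The contributions $t\partial_sz_1$ and $\tfrac12t^2\partial_sz_2$ do not spoil the bound for $T$ small.

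Condition \eqref{z1N} follows directly from \eqref{e:expansion1}: the pointwise bound $|u_\nu^{(i)}(s,t)-z_1(s)|\leq Ct^\alpha(1+|s|^{1+\alpha})^{-1}$ combined with $|\partial_tz-z_1|=t|z_2|\leq Ct(1+|s|^{1+\alpha})^{-1}$, after division by $c(s)\gtrsim(1+|s|^{2\alpha/3})^{-1}$, produces a uniform bound of order $t^\alpha$.

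The main obstacle is \eqref{z2N}. By our choice of $z_1,z_2$ and \eqref{e:expansion2}, the integrand reduces to a remainder $R(s,t)$ with $|R(s,t)|\leq Ct^{1+\alpha}(1+|s|^{1+\alpha})^{-1}$. In the non-degenerate case, the uniform bound $|\int_0^sR|\leq Ct^{1+\alpha}$ together with $c^{3/2}(s)\geq c_{min}^{3/2}$ yields the required $O(t^\alpha)$. In the degenerate case, however, $c^{3/2}(s)\gtrsim(1+|s|^\alpha)^{-1}$, so the uniform bound is insufficient and one must extract the tail decay $|\int_0^sR|\lesssim t^{1+\alpha}|s|^{-\alpha}$, which upon division by $tc^{3/2}(s)$ matches the admissible order $O(t^\alpha)$ uniformly in $s$. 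Converting tail decay into a bound on $\int_0^sR$ requires arranging $\int_\R R=0$, for example by a small compactly-supported correction of $z_2$ in a region where $c$ is bounded below (which leaves \eqref{z1N} and the regularity unaffected), or alternatively by exploiting the structural cancellations in $R$ revealed by the proof of Proposition \ref{p:expansion}. This tail-decay verification is the technical heart of the construction.
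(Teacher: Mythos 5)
Your ansatz, choice of $z_1,z_2$, regularity bookkeeping (Lemma \ref{l:reg} plus Theorem \ref{t:TPhi}), and verification of \eqref{z1N}, \eqref{z3N}, and of \eqref{z2N} when $\inf_s c>0$ all coincide with the paper's argument, and you correctly locate the obstruction in the degenerate case. But the fix you sketch for that obstruction is where the actual work lies, and as stated it fails. Write $D(\cdot,t):=\partial_t z-\tfrac{1}{2N}\sum_i u_\nu^{(i)}$. When $\inf_s c=0$ the weight $c^{-3/2}(s)$ is unbounded, so \eqref{z2N} forces the integral of $D(\cdot,t)$ over \emph{each} half-line $(-\infty,0)$ and $(0,\infty)$ to vanish \emph{exactly for every small $t$} (two conditions, not the single condition $\int_\R R=0$ you propose): if, say, $\int_0^\infty D(\cdot,t)\neq 0$ for some $t$, then $\sup_s c^{-3/2}(s)\bigl|\int_0^s D\bigr|=\infty$ for that $t$, since the tail $\int_s^\infty D$ decays while $c^{-3/2}(s)$ blows up. A compactly supported, time-independent correction $w$ of $z_2$ changes these half-line integrals only by $\tfrac{t}{1}\int w+O(t^2)$ (the $O(t^2)$ coming from the nonlinear, nonlocal dependence of $u_\nu^{(i)}$ on $z$), so it can cancel them to leading order in $t$ but not identically in $t$; and "exploiting structural cancellations in $R$" is not an argument. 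This is a genuine gap, not a technicality left to the reader.

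The paper closes it by making the correction time-dependent: it adds $\sum_{k=1,2}\psi_k(t)f_k(s)$ with $f_k\in C_c^\infty(\R)$, $\int_{I_k}f_l\,ds=\delta_{kl}$ for $I_1=(-\infty,0)$, $I_2=(0,\infty)$, and determines $\psi=(\psi_1,\psi_2)$ by imposing $\int_{I_k}D(\cdot,t)\,ds=0$ for all $t$. This is an ODE system $\psi_k'(t)=h_k(t,\psi(t))$, where $h_k$ involves $u_\nu^{(i)}[z]$ and is Lipschitz in $\psi$ by the stability estimate \eqref{e:W02} of Lemma \ref{l:sing0}, so Cauchy--Lipschitz applies; the expansion \eqref{e:expansion2} then gives $\psi(0)=\psi'(0)=\psi''(0)=0$ and $\psi'(t)=o(t)$, which is what guarantees that the correction stays in $C^2([0,T];C^{1,\alpha}_*(\R))$ and does not disturb \eqref{z1N} or the $o(t)$ accuracy needed for \eqref{z2N}. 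If you want to keep your presentation, you must replace your compactly supported modification of $z_2$ by such a fixed-point/ODE construction (or an equivalent device producing exact half-line cancellation for every $t$) and verify these vanishing conditions on $\psi$ at $t=0$.
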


\begin{proof}
First of all, let us fix $f_1,f_2\in C_c^\infty(\R)$ such that $\int_{I_k}f_l(s)\,ds=\delta_{kl}$, where $I_1=(-\infty,0)$ and $I_2=(0,\infty)$.
We define
\begin{equation}\label{e:definez}
	z(s,t)=z_0(s)+tz_1(s)+\tfrac{1}{2}t^2z_2(s)+\sum_{k=1}^2\psi_k(t)f_k(s),
\end{equation}
where
\begin{align*}
	z_1&:=T_{\Phi_0}\tilde{z}_0,\\
	z_2&:=T_{\Phi_0}z_1+T_{\Psi_0}\tilde{z}_0+\bar c\sigma(\partial_sz_0)\partial_s^2z_0,
\end{align*}
with $\Phi_0$, $\Psi_0$ and $\bar{c}$ as defined in Proposition \ref{p:expansion}, and
 $\psi_k\in C^2([0,T])$ are functions of time still to be fixed, such that $\psi_k(0)=\psi_k'(0)=\psi_k''(0)=0$ for $k=1,2$.

Let us check that $z$ satisfies the conditions of Proposition \ref{p:expansion}. Since $\tilde{z}_0\in C^{3,\alpha}_*(\R)$, Lemma \ref{l:reg} (applied with $z(s,t)=z_0(s)$) implies that $\Phi_0\in \mathcal{W}^{2,\alpha}$. Theorem \ref{t:TPhi} then implies that $z_1\in C^{2,\alpha}_*(\R)$. Consequently, from the expression for $\Psi_0$ in Proposition \ref{p:expansion} and using Lemma \ref{l:reg} again we deduce that $\Phi_1\in \mathcal{W}^{1,\alpha}$. Hence $z_2\in C^{1,\alpha}_*(\R)$, using once more Theorem \ref{t:TPhi}. Thus we have shown that $z\in C^2([0,T];C^{1,\alpha}_*(\R))$. It follows now from Proposition \ref{p:expansion} that
\begin{eqnarray}
\lim_{t\to 0}\sup_{s}(1+|s|^{1+\alpha})\left|\partial_tz(s,t)-u_{\nu}^{(i)}(s,t)\right|&=  0\,, \label{zz1}\\
\lim_{t\to 0}\sup_s\frac{1}{t}(1+|s|^{1+\alpha})\left|\partial_tz(s,t)-\frac{1}{2N}\sum_{i}u_\nu^{(i)}(s,t)\right| &=0\,. \label{zz2}
\end{eqnarray} 
In the case $\inf_s c(s)>0$ conditions \eqref{z1N}-\eqref{z3N} follow directly from \eqref{zz1}-\eqref{zz2}. In particular, in this case we can take $\psi_k\equiv 0$ for $k=1,2$. 

In what follows, let us then assume $\inf_s c(s)=0$, so that also $\beta=0$ and \eqref{e:cinf} holds. Observe first of all that \eqref{e:cinf} together with \eqref{zz1} directly implies \eqref{z1N}. Furthermore, 
$$
|\partial_sz|,|\partial_sc|\lesssim (1+|s|^{1+\alpha})^{-1}\lesssim c^{1/2},
$$
so that also \eqref{z3N} holds. It remains to verify \eqref{z2N}. To this end our aim is to choose $\psi_k$ in such a way that 
\begin{equation}\label{e:integral}
	\int_{I_k}\partial_tz(s,t)-\frac{1}{2N}\sum_{i}u_\nu^{(i)}(s,t)\,ds=0\quad\forall\,t\in[0,T], k=1,2.
\end{equation}
Indeed, assume for the moment that \eqref{e:integral} holds. Then \eqref{zz2} implies for $s>0$ 
$$
\int_0^s\partial_tz-\frac{1}{2N}\sum_{i}u_\nu^{(i)}\,ds'=-\int_s^\infty\partial_tz-\frac{1}{2N}\sum_{i}u_\nu^{(i)}\,ds'=\frac{o(t)}{1+|s|^{\alpha}}\sim o(t)c^{3/2}(s),
$$
and similarly for $s<0$. Thus in this case also \eqref{z2N} is verified. 

To complete the proof it therefore remains to choose $\psi_k$ to ensure \eqref{e:integral}. Let 
$$
h_k(t,\psi(t)):=\int_{I_k}\frac{1}{2N}\sum_{i}u_\nu^{(i)}(s',t)-z_1(s')-tz_2(s')\,ds',
$$
where dependence on $\psi=(\psi_1,\psi_2)$ appears in the implicit dependence of $u_\nu^{(i)}$ on $z$ defined in \eqref{e:definez} via \eqref{e:unui}-\eqref{e:defzj}. Then, recalling the choice of $f_k$, \eqref{e:integral} is equivalent to the ODE system
$$
\psi_k'(t)=h_k(t,\psi(t)),\quad k=1,2
$$
with initial condition $\psi_k(0)=0$. Using estimate \eqref{e:W02} in Lemma \ref{l:sing0} we verify that $x\mapsto h(t,x)$ is uniformly Lipschitz continuous. Therefore the Cauchy-Lipschitz theorem is applicable and yields a unique solution $\psi:[0,T]\to \R^2$. 

Then, by recalling our choice for $z_1$, $z_2$ and the expansion \eqref{e:expansion2} we see that $h(t,0)=o(t)$. By differentiating the ODE we also obtain $\tfrac{d}{dt}\psi'(t)=\partial_th+\partial_xh\psi'$. Since $\psi(0)=0$ and $h(0,0)=0$, we obtain $\psi'(0)=0$, hence $\psi(t)=o(t)$. Furthermore, since $\partial_t h(0,0)=0$ also, we deduce $\psi''(0)=0$, and furthermore, from the equation we deduce $\psi'(t)=o(t)$. This completes the proof.    

\end{proof}


\bibliographystyle{acm}


\end{document}